\documentclass[a4paper]{article}
\usepackage{amsfonts,amsmath,amssymb,amsthm}
\usepackage{color}
\usepackage{ifpdf}
\ifpdf
    \pdfcompresslevel=9
    \usepackage[pdftex]{graphicx}
    \DeclareGraphicsExtensions{.png,.pdf,.jpg}
\else
   \usepackage[dvips]{graphicx}
   \DeclareGraphicsExtensions{.eps}
\fi
\graphicspath{{.}{figuresPL/}}

\usepackage{graphicx}

\parindent=       0    cm

\addtolength{\textwidth}{2cm} 
\addtolength{\oddsidemargin}{-1cm}
\addtolength{\evensidemargin}{-1cm}
%\addtolength{\topmargin}{-.75cm}

\bibliographystyle{plain}

\newtheorem{lemma}{Lemma}[section]
\newtheorem{theorem}[lemma]{Theorem}

\newtheorem{proposition}[lemma]{Proposition}
\newtheorem{remark}[lemma]{Remark}
\newtheorem{definition}[lemma]{Definition}

\def\Om{\Omega}
\def\S{\mathbb{S}}
\def\R{\mathbb{R}}
\def\T{\mathbb{T}}
\def\E{\mathcal{E}}
\renewcommand{\S}{\mathbb{S}}
\def\Z{\mathbb{Z}}
\def\N{\mathbb{N}}

\renewcommand{\phi}{\varphi}
\def\p{\phi}

\def\Div{\textup{div}\,}

\def\HH{\mathcal{H}^{d-1}}
\def\1{\mathbf{1}}

\def\CA{\mathcal{CA}}

\def\XXint#1#2#3{{\setbox0=\hbox{$#1{#2#3}{\int}$ }
\vcenter{\hbox{$#2#3$ }}\kern-.57\wd0}}

\def\X{X}
\def\eps{\varepsilon}
\renewcommand{\subset}{\subseteq}

\newcommand{\mesrest}{\text{\huge$\llcorner$}}

\begin{document}

\title{Existence and qualitative properties of isoperimetric sets in periodic media}
\author{A. Chambolle \footnote{CMAP, Ecole Polytechnique, CNRS,
        Palaiseau, France, email: antonin.chambolle@cmap.polytechnique.fr},
\and
        M. Goldman
        \footnote{Max Planck Institute for Mathematics in the Sciences, Leipzig, Germany, email: goldman@mis.mpg.de, funded by a Von Humboldt PostDoc fellowship}
        \and M. Novaga
        \footnote{Dipartimento di Matematica, Universit\`a di Padova,
        via Trieste 63, 35121 Padova, Italy, email: novaga@math.unipd.it}
}
\date{}
\maketitle
\begin{abstract}
We review and extend here some recent results on the existence of minimal surfaces and isoperimetric sets in non homogeneous and anisotropic  periodic media. 
We also describe the qualitative properties of the homogenized surface tension, also known as stable norm (or minimal action) in Weak KAM theory.  
In particular we investigate its strict convexity and differentiability properties.
\end{abstract}

\section{Introduction}
In Euclidean spaces, it is well known that hyperplanes are local minimizers of the perimeter and that balls are the (unique) solutions to the isoperimetric problem i.e. 
they have the least perimeter among all the sets having a given volume. The situation of course changes for interfacial energies which are no longer homogeneous
nor isotropic but it is still natural to investigate the existence of local minimizers which are plane-like and of compact isoperimetric sets in this context. 
More precisely, for an open set $\Om\subset \R^d$ and a set of finite perimeter $E$ (see \cite{giusti}), we will consider interfacial energies of the form
\[\E(E,\Om):=\int_{\partial^* E\cap \Om} F(x,\nu^E) d\HH\]
where $\HH$ is the $(d-1)$-dimensional  Hausdorff measure,  $\nu^E$ is  the internal normal to $E$, $\partial^*E$ is the reduced boundary of $E$, and $F(x,p)$ is continuous and periodic in $x$, convex and one-homogeneous in $p$ with
\begin{equation}\label{boundF} 
 c_0|p|\le F(x,p)\le c_0^{-1}|p| \qquad \forall (x,p)\in \R^d\times \R^d
\end{equation}
for some $c_0\in (0,1]$. When $\Om=\R^d$, we will simply denote by $\E$, the functional $\E(\cdot,\R^d)$. In the following we will denote by $\T$
 the $d$ dimensional torus and let $Q:=[0,1)^d$. 

In a first part, we review the fundamental result of  Caffarelli and De La Llave \cite{CDLL} concerning the existence of plane-like minimizers of $\E$ and we will
 define a homogenized energy $\p(p)$ (usually called the stable norm or the minimal action functional), which represents the average energy of a plane-like minimizer in the direction $p$.  The qualitative  properties of the minimal action are studied in the second section.  The following result was proven in \cite{CGN_PL} (see also 
 \cite{auerbangert,HJG}). 
\begin{itemize}
\item If $p$ is ``totally irrational'' (meaning that there exists no $q\in \Z^d$ such that $q\cdot p= 0$)  then $\nabla\p(p)$
exists.
\item The same occurs for any $p$ such that the plane-like minimizers satisfying the strong Birkhoff property  give rise to a foliation of the space.
\item If there is a gap in this lamination and if
 $(q_1, \dots , q_k)\in \Z^d$ is a maximal
family of independent vectors such that $q_i\cdot p=0$, then
$\partial \phi(p)$ is a convex set of dimension $k$, and
$\phi$ is differentiable in the directions which are orthogonal
to $\{q_1, \dots,q_k\}$. In particular if $p$ is not totally irrational then $\p$ is not differentiable at $p$.
\item  $\phi^2$ is strictly convex.
\end{itemize}

In the last section, we extend some results of \cite{GNvol} concerning the existence of compact minimizers of the isoperimetric problem
\begin{equation}\label{Probisoper}
 \min_{|E|=v} \E(E)
\end{equation}
for every given volume $v>0$ and show that these minimizers, once rescaled, converge to the Wulff shape associated to the stable norm $\p$.\\

Let us conclude this introduction by pointing out that, using a deep result of Bourgain and Brezis \cite{bourgainbrezis}, see also \cite{DPPF,CT, CGN_PL}, all the results presented here directly extend to functionals of the form
\begin{equation}\label{funcg}
 \int_{\partial^* E\cap \Om} F(x,\nu^E) d\HH+\int_{E\cap \Om} g(x)\ dx
\end{equation}
where $g\in L^d(\T)$ is a periodic function with zero mean satisfying some smallness assumption (for the results of Section \ref{secreg} to hold, one needs also that $g$ is Lipschitz continuous).\\
Notice also that when considering the perimeter i.e. when $F(x,p)=|p|$,  smooth minimizers of \eqref{funcg} satisfy the prescribed mean curvature equation
\[\kappa_E =-g\]
where $\kappa_E$ is the mean curvature of the set $E$. The existence of plane-like minimizers of \eqref{funcg} can then be rephrased in term of existence of plane-like sets with prescribed mean curvature. On the other hand, in \cite{GNvol},
 the isoperimetric problem \eqref{Probisoper} was introduced in order to study existence of compact sets with prescribed mean curvature, leading to the proof of the following theorem.
\begin{theorem}
 Let $d\le 7$ and $g$ be a periodic ${C}^{0,\alpha}$ function on $\R^d$ with zero average and satisfying a suitable smallness assumption.
 Then for every $\eps>0$ there exists $\eps'\in [0,\eps]$ such that there exists a compact solution of 
 \[
 \kappa_E=g+\eps'.
 \]
\end{theorem}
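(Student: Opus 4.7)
I would realize the prescribed-curvature set as a volume-constrained isoperimetric minimizer. For $v>0$, consider
\[
\min\left\{\E(E)-\int_E g\,dx \;:\; |E|=v\right\};
\]
this is the functional \eqref{funcg} after replacing $g$ by $-g$, which still satisfies the periodicity, zero-mean and smallness hypotheses. The isoperimetric existence result extended in the last section provides, for every $v>0$, a compact minimizer $E_v$. Since $d\le 7$ and $g\in C^{0,\alpha}$, De~Giorgi--Almgren regularity for almost-minimizers of the perimeter ensures that $\partial E_v$ is a $C^{2,\alpha}$ hypersurface; taking the first variation with the volume constraint then yields the pointwise Euler--Lagrange equation
\[
\kappa_{E_v}=g+\lambda_v \quad\text{on }\partial E_v,
\]
where $\lambda_v\in\R$ is the Lagrange multiplier associated to the volume constraint.

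It then remains to show that for every $\eps>0$ one can find $v$ with $\lambda_v\in[0,\eps]$. The natural object is the isoperimetric profile $J(v):=\E(E_v)-\int_{E_v}g\,dx$: a standard comparison argument (adding or removing a small Wulff-shaped piece placed far from $E_v$) shows that $J$ is locally Lipschitz on $(0,\infty)$ and $J'(v)=\lambda_v$ at every differentiability point. The key input is the Wulff-shape convergence theorem of the last section: after translation, $v^{-1/d}E_v\to W_\p$ with good control, so $\partial E_v$ is asymptotically a scaled copy of the Wulff shape and inherits its strictly positive mean-curvature structure (which is guaranteed by the strict convexity of $\p^2$ proved in Section~2). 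Averaging the Euler--Lagrange equation over $\partial E_v$ and using the zero-mean property of $g$ to control its surface average, one obtains $\lambda_v>0$ and $\lambda_v=O(v^{-1/d})\to 0$ as $v\to+\infty$. Picking any sufficiently large such $v$ and setting $\eps':=\lambda_v$ then produces the claimed compact solution of $\kappa_E=g+\eps'$.

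The \emph{main obstacle} is making this analysis quantitative. Transferring the asymptotic Wulff-shape information back to a pointwise mean-curvature statement for the finite-volume minimizer $E_v$ requires uniform density estimates and diameter control on $E_v$, together with a uniform $C^{2,\alpha}$ bound on $v^{-1/d}\partial E_v$ so that the mean curvatures converge in the limit. It further requires that the surface average of $g$ on $\partial E_v$ decay faster than the leading term $c\,v^{-1/d}$ coming from the Wulff shape, which uses the zero mean and smallness of $g$ together with the regularity of $\partial E_v$ to exploit cancellations in $\int_{\partial E_v}g\,d\HH$ via an equidistribution-type argument. These ingredients are what pin down the sign and the rate of $\lambda_v$ and thereby produce the desired range $[0,\eps]$.
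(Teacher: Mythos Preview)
The paper does not actually prove this theorem: it is quoted in the introduction as a result of \cite{GNvol}, and the body of the paper supplies only the surrounding machinery (existence of compact volume-constrained minimizers in Theorem~\ref{existvol}, and the Wulff-shape limit). So there is no ``paper's own proof'' to compare against line by line. What can be said is that the strategy the paper attributes to \cite{GNvol}---introduce the constrained isoperimetric problem, obtain a compact smooth minimizer, and read off the Euler--Lagrange equation $\kappa_{E_v}=g+\lambda_v$---is exactly your scheme, and this part of your plan is sound. Remark~\ref{remspadaro} confirms that the approach of \cite{GNvol} goes through the Euler--Lagrange equation and needs precisely the smoothness setting ($d\le 7$, elliptic integrand, H\"older forcing) that the statement assumes.

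Where your proposal is shakier is in the analysis of the multiplier, and two of the steps you lean on are not justified by anything in this paper. First, the strict convexity of $\p^2$ (Theorem~\ref{convex}) does \emph{not} imply that the Wulff shape $W_\p$ is smooth with strictly positive mean curvature: Section~\ref{secreg} shows that $\p$ is generically non-differentiable at rational directions, so $\partial W_\p$ typically has flat facets and edges, and no pointwise curvature argument can be transferred from $W_\p$ back to $E_v$. Second, the zero \emph{volume} average of $g$ gives no direct control on the \emph{surface} average $\HH(\partial E_v)^{-1}\int_{\partial E_v} g$; the ``equidistribution-type'' cancellation you invoke is a genuine additional ingredient that is nowhere supplied here.

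A more robust route---and the one suggested by the way \cite{GNvol} is used throughout Section~4---is to avoid pointwise curvature altogether and argue on the isoperimetric profile $J(v)$ directly: under the smallness assumption one has a two-sided estimate $c\,v^{(d-1)/d}\le J(v)\le C\,v^{(d-1)/d}$ (upper bound by testing with balls, lower bound from the isoperimetric inequality together with the bound $\bigl|\int_E g\bigr|\le C\,P(E)$ for periodic zero-mean $g$ alluded to via \cite{bourgainbrezis}). From this growth one extracts, on a suitable interval of large volumes, a point where the difference quotient of $J$ lies in $[0,\eps]$, and hence a $v$ with $\lambda_v\in[0,\eps]$. This bypasses both problematic steps above.
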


\section{Plane-like minimizers}
In \cite{CDLL}, Caffarelli and De La Llave proved the existence of plane-like minimizers of $\E$. 
\begin{theorem}\label{thmcdll}
There exists $M>0$ depending only on $c_0$  such that for every $p\in \R^d\setminus\{0\}$ and $a\in \R$, there exists a local minimizer (also called Class A Minimizer) $E$ of $\E$ such that  
\begin{equation}\label{PLexist}
%\{x\cdot p> a+ M|p|\}\subset 
\left\{ x\cdot \frac{p}{|p|} > a+M\right\}
\ \subset \ E\ \subset \ \left\{ x\cdot \frac{p}{|p|} > a-M\right\}.
\end{equation}
Moreover $\partial E$ is connected. A set satisfying the condition \eqref{PLexist} is called plane-like. 
\end{theorem}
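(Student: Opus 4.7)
Following Caffarelli--De La Llave, I would prove the rational case by a constrained minimization on a periodicity cell and then deduce the general case by approximation. Without loss of generality $|p|=1$ and $a=0$.

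For rational $p$ of the form $p=q/|q|$ with $q\in\Z^d$, the orthogonal sublattice $\Lambda=\{q'\in\Z^d:\,q'\cdot p=0\}$ has rank $d-1$; let $T_\Lambda$ be a bounded fundamental domain for $\Lambda$ in the hyperplane $p^\perp$. For $M>0$ to be chosen, minimize $\E(E,T_\Lambda\times(-M,M)p)$ over $\Lambda$-periodic sets $E$ of finite perimeter satisfying the plane-like constraint
\[
\{x\cdot p>M\}\subseteq E\subseteq\{x\cdot p>-M\}.
\]
The admissible class is non-empty (it contains the half-space $\{x\cdot p>0\}$), the confinement in a bounded slab combined with \eqref{boundF} gives uniformly bounded perimeters along minimizing sequences, and by $L^1_\loc$-compactness of finite-perimeter sets together with the lower semicontinuity of $\E$ (valid since $F(x,\cdot)$ is convex and $F$ is continuous) one obtains a minimizer $E^*$.

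The heart of the proof is to show that $M$ can be chosen depending only on $c_0$ so that the constraint is not active on $\{x\cdot p=\pm M\}$. Inside the open slab, $E^*$ is an unconstrained local minimizer of $\E$; by \eqref{boundF} it is a quasi-minimizer of the standard perimeter with universal constants, and therefore satisfies uniform interior and exterior density estimates. Comparing $E^*$ with $E^*\setminus\{x\cdot p<s\}$ for $s$ slightly above $-M$ shows that any ``lower tongue'' $E^*\cap\{x\cdot p<s\}$ has excess perimeter which, again by \eqref{boundF}, is comparable to the standard isoperimetric deficit against a flat floor and hence bounded below by a universal constant times $|E^*\cap\{x\cdot p<s\}|^{(d-1)/d}$. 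Played against the density lower bound, this forces the tongue to lie within a universally bounded depth; the symmetric comparison applied to the complement at the top of the slab handles the upper inclusion. This yields $M=M(c_0)$.

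With the constraint inactive, $E^*$ extended $\Lambda$-periodically is a Class A minimizer of $\E$ on $\R^d$ obeying \eqref{PLexist}. For irrational $p$, approximate by rational directions $p_n\to p$; the associated plane-like minimizers $E_n$ all satisfy \eqref{PLexist} with the same universal $M$ and, up to a subsequence, converge in $L^1_\loc$ to a limit $E$ which is a Class A minimizer of $\E$ by semicontinuity and still satisfies the plane-like bound. Finally, connectedness of $\partial E$ follows from a ``no bubbles'' argument: any bounded connected component of $E$ or of $E^c$ could be excised or filled in to strictly decrease $\E$ by \eqref{boundF}, contradicting minimality, so both $E$ and $E^c$ are connected and their common boundary is connected as well.

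The main obstacle is the universality of the slab width in the third paragraph: turning the constrained minimality of $E^*$ into a density-plus-isoperimetric obstruction against deep tongues, with constants depending only on the ellipticity $c_0$ and independent of the lattice $\Lambda$ or the size of the fundamental cell.
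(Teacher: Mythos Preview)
The paper does not give its own proof of this theorem: it is simply quoted from Caffarelli--De~La~Llave \cite{CDLL}, so there is no in-paper argument to compare against. Your outline is indeed the CDLL strategy (constrained minimization on a rational cell, show the slab constraint is inactive, pass to the limit for irrational directions).

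The obstacle you flag at the end is real, and your third paragraph does not overcome it. Cutting the lower tongue and running the isoperimetric/ODE argument $f(s)^{(d-1)/d}\le C f'(s)$ only yields $f^{1/d}(s)\ge c\,(s+M)$, and the complementary upper bound $f(s)\le |T_\Lambda|\cdot(s+M)$ then bounds $s+M$ in terms of $|T_\Lambda|$, the size of the fundamental cell --- which blows up as $q$ gets large. So this step, as written, gives $M=M(c_0,\Lambda)$, not $M=M(c_0)$. The Caffarelli--De~La~Llave fix is not a sharper isoperimetric estimate but an additional ingredient: the submodularity (``cut and paste'') inequality $\E(E\cap(E+q))+\E(E\cup(E+q))\le 2\,\E(E)$ for $q\in\Z^d$, which forces the constrained minimizer to be ordered under integer translations (the Birkhoff property). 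Using translates by $q\in\Z^d$ with $q\cdot p$ close to $1$ --- and not just by $q\in\Lambda$ --- together with the density estimate then controls the oscillation of $\partial E^*$ by a constant depending only on $c_0$, independently of $|T_\Lambda|$. Your sketch should incorporate this ordering step explicitly.

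A smaller point: your ``no bubbles'' argument shows $E$ and $E^c$ have no \emph{bounded} extra components, but a priori a plane-like set could have an unbounded component trapped in the slab $\{|x\cdot p-a|<M\}$ and extending to infinity in $p^\perp$; Class~A minimality does not directly rule this out since excising such a piece changes $E$ outside every ball. In the CDLL construction this is handled because the minimizer is $\Lambda$-periodic and the comparison can be done on the cell; alternatively one appeals to the Birkhoff property as in the paper's later Proposition following Definition~\ref{defBirk}.
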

\begin{definition}
Let $p\in \R^d\setminus\{0\}$ and let $E$ be a plane-like minimizer of $\E$ in the direction $p$. We set  
\[\p(p):= |p|\lim_{R\to \infty} \frac{1}{\omega_{d-1} R^{d-1}} \, \E(E, B_R),\]
where $\omega_{d-1}$ is the volume of the unit ball in $\R^{d-1}$.
\end{definition}
{ } Caffarelli and De La Llave  proved that this limit exists and does not depend on $E$. In \cite{CT}, the first author and Thouroude related this
definition to the cell formula: 
\begin{equation}\label{defphi}
\p(p)\ =\ \min\left\{
\int_{\T} F(x,p+D v(x))\,:\,
v\in BV(\T)
\right\}\,.
\end{equation}
It is obvious from~\eqref{defphi} that $\p$ is a convex, one-homogeneous function. However, since the problem defining $\phi$ is not strictly convex, in general the minimizer of \eqref{defphi} is not unique.
Nevertheless, this uniqueness generically holds (see \cite[Th. 4.23, Th. B.1]{CGN_PL}). This is an instance of the so-called Ma\~n\'e's conjecture.  It has been  shown in~\cite{CT}
that the minimizers of \eqref{defphi} give an easy way to construct plane-like minimizers:
\begin{proposition}
Let $v_p$ be a minimizer of \eqref{defphi} then for every $s\in \R$, the set $\{v_p(x)+ p\cdot x >s\}$ is a plane-like minimizer of $\E$ in the direction $p$.
\end{proposition}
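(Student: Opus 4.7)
Set $u(x) := v_p(x) + p \cdot x$ and $E_s := \{u > s\}$, so that the periodicity relation $E_{s+p\cdot z} = E_s + z$ holds for every $z \in \Z^d$. The strategy is to use the BV coarea formula for convex, one-homogeneous integrands,
\[\int_{[0,N)^d} F(x, p + Dv_p)\,dx \ =\ \int_\R \E\big(E_s, [0,N)^d\big)\,ds,\]
to convert the cell-problem minimality of $v_p$ into local minimality of each level set $E_s$, while a separate boundedness argument for $v_p$ produces the plane-like sandwich.

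\emph{Step 1: boundedness and plane-like property.} I would first argue that $v_p$ may be taken in $L^\infty(\T)$ with $\|v_p\|_\infty \le M$ for a constant $M$ depending only on $c_0$. A natural route is truncation: $v_p^T := (-T) \vee v_p \wedge T$ is admissible in~\eqref{defphi}, and minimality together with~\eqref{boundF} forces $v_p = v_p^T$ for $T$ large enough. Alternatively, the plane-like minimizers of Theorem~\ref{thmcdll} can be used as barriers to bound $v_p$ by comparison. Once $|v_p| \le M$ almost everywhere, the containments
\[\{p\cdot x > s + M\}\ \subseteq\ E_s\ \subseteq\ \{p\cdot x > s - M\}\]
are immediate, giving the plane-like property~\eqref{PLexist}.

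\emph{Step 2: periodization and the level swap.} A symmetrization argument (averaging a $\T_N$-minimizer $w$ over its $N^d$ lattice translates $w(\cdot+k)$, $k \in \{0,\ldots,N-1\}^d$, and using convexity of $F$ in $p$ together with periodicity of $F(\cdot,q)$ in $x$) shows that the cell problem on the large torus $\T_N := \R^d/N\Z^d$ has minimum $N^d\phi(p)$, so $v_p$ is a minimizer on every $\T_N$. Suppose now for contradiction that some $E_{s_0}$ is not a local minimizer: there exist $R > 0$, $\eta > 0$ and a set $F'$ of finite perimeter with $F' \Delta E_{s_0}$ compactly contained in $B_R$ and $\E(F', B_R) \le \E(E_{s_0}, B_R) - \eta$. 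Pick $N$ with $B_R \subseteq [0,N)^d$. For small $\delta > 0$ define a monotone family $\{\tilde E_s\}$ that coincides with $\{E_s\}$ for $s \notin (s_0-\delta, s_0+\delta)$ as well as on $B_R^c$, and that, inside $B_R$ and inside the band of levels, equals the constant set $(F' \cup E_{s_0+\delta}) \cap E_{s_0-\delta}$. Monotonicity is straightforward to check, so $\{\tilde E_s\}$ is the super-level family of some $\tilde u$ with $\tilde u = u$ on $\R^d \setminus B_R$, and $\tilde v := \tilde u - p\cdot x$ is $N\Z^d$-periodic. Applying coarea to $\tilde v$ and using the energy gain on $F'$ would then yield
\[\int_{[0,N)^d} F(x, p + D\tilde v)\,dx \ \le\ \int_{[0,N)^d} F(x, p + Dv_p)\,dx\ -\ 2\delta\eta\ +\ o(\delta)\]
as $\delta \to 0^+$, contradicting minimality of $v_p$ on $\T_N$.

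\emph{Main obstacle.} The delicate point is the perimeter accounting in the level swap. Interpolating between $F'$ and the unchanged neighbouring level sets $E_{s_0\pm\delta}$ creates additional lateral surface, both inside $B_R$ (along the gap $E_{s_0-\delta}\setminus E_{s_0+\delta}$) and along $\partial B_R$, and this lateral contribution must be shown to be $o(\delta)$ so that the $-2\delta\eta$ gain survives. The required estimate hinges on the BV continuity $|E_{s_0-\delta} \setminus E_{s_0+\delta}| \to 0$ together with a Fleming--Rishel type bound on $\int_{s_0-\delta}^{s_0+\delta} \HH(\partial^* E_s \cap B_R)\,ds$. Pinning down this quantitative estimate, and verifying that monotonicity is not spoiled by the boundary modification on $\partial B_R$, is where I expect the bulk of the technical work to lie.
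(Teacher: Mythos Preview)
The paper does not give its own proof of this proposition; it attributes the result to~\cite{CT}. The argument there, which is foreshadowed by the very next proposition in the paper, proceeds by convex duality: one produces a divergence-free periodic field $z$ with $F^\circ(x,z)\le 1$ and $[z,p+Dv_p]=F(x,p+Dv_p)$, and then the coarea formula shows $z$ calibrates every $E_s$. Local minimality of $E_s$ is then a one-line application of the divergence theorem to any compactly supported competitor. This bypasses entirely the level-swap construction and the lateral-surface accounting you identify as the main obstacle.

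Your Step~2 is a legitimate alternative route to local minimality, and the obstacle you flag is real but tractable (the Fleming--Rishel bound you mention is indeed what controls the lateral contribution). So that part is fine, just longer than necessary.

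Step~1, however, has a genuine gap. The truncation $v_p^T$ is admissible, but minimality of $v_p$ together with~\eqref{boundF} does \emph{not} force $v_p=v_p^T$: on the set $\{|v_p|\ge T\}$ the truncated function has $p+Dv_p^T=p$, and there is no reason for $F(x,p)\le F(x,p+Dv_p)$ pointwise, so the energy need not drop. Your barrier alternative is closer to the truth but is stated in the wrong logical order. Boundedness of $v_p$ is not an input but a \emph{consequence} of the minimality of the level sets: once Step~2 establishes that each $E_s$ is a Class~A minimizer, the periodicity of $v_p$ gives $E_s$ the (strong) Birkhoff property, and then the proposition later in the paper (Class~A minimizers with the Birkhoff property are plane-like with a uniform constant $M$) yields the sandwich $\{p\cdot x>s+M|p|\}\subset E_s\subset\{p\cdot x>s-M|p|\}$, from which $\|v_p\|_\infty\le M|p|$ follows. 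So the correct order is: local minimality first (via calibration or your coarea argument), plane-like property second, boundedness of $v_p$ as a corollary.
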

{ } 
For $\eps>0$ and $E\subset\R^d$ of finite perimeter, let 
\[
\E_\eps(E) := \eps^{(d-1)}\E\left(\eps^{-1}E\right)
= \int_{\partial^* E} F\left(x/\eps,\nu^E\right) \ d\HH.
\]  
It was shown in \cite{CT} (see also \cite{BrCP}) that the convergence of the average energy of plane-like minimizers to the stable norm can also be reinterpreted in term of $\Gamma$-convergence \cite{DM}.

\begin{theorem}\label{thconv}
 When $\eps\to 0$, the functionals $\E_\eps$ $\Gamma$-converge,
with respect to the $L^1$-convergence of the characteristic functions, to the anisotropic functional
\[
\E_0(E)= \int_{\partial^*E}\phi(\nu^E)\,d\mathcal H^{d-1}
\qquad E\subset\R^d\ {\rm of\ finite\ perimeter.}
\]
\end{theorem}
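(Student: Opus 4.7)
The plan is to prove the two $\Gamma$-convergence inequalities separately, with respect to the $L^1_{\loc}$-convergence of characteristic functions.

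For the $\Gamma$-$\liminf$ inequality, let $E_\eps\to E$ in $L^1_{\loc}$ with $\liminf_\eps \E_\eps(E_\eps)<\infty$. I would pass to a subsequence along which the Radon measures $\mu_\eps := F(x/\eps,\nu^{E_\eps})\,\HH\mesrest\partial^*E_\eps$ converge weakly-$*$ to a finite Radon measure $\mu$. By the Besicovitch differentiation theorem, it suffices to show that at $\HH$-a.e.\ $x_0\in\partial^*E$ one has
\[
\frac{d\mu}{d(\HH\mesrest\partial^*E)}(x_0)\ \geq\ \phi(\nu^E(x_0)).
\]
This I would obtain through a Fonseca--Müller blow-up at $x_0$: after translation and isotropic rescaling, $E$ looks locally like the half-space with normal $\nu^E(x_0)$, and the rescaled energy of $E_\eps$ on a small ball compares, via the cell-problem definition \eqref{defphi} of $\phi$, to a competitor with $p=\nu^E(x_0)$.

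For the $\Gamma$-$\limsup$ inequality, the basic building block is a recovery sequence for the half-space $H_p := \{x\cdot p > s\}$. Starting from a minimizer $v_p$ of \eqref{defphi}, set
\[
E_\eps := \{\eps v_p(x/\eps) + p\cdot x > s\}.
\]
Then $E_\eps/\eps = \{v_p(y) + p\cdot y > s/\eps\}$ is a plane-like minimizer of $\E$ in direction $p$ by the proposition above, hence within bounded Hausdorff distance $M$ from a hyperplane; rescaling by $\eps$ we deduce $E_\eps\to H_p$ in $L^1_{\loc}$. Moreover $\E_\eps(E_\eps,B_R) = \eps^{d-1}\E(E_\eps/\eps,B_{R/\eps})$ tends to $\phi(\nu^{H_p})\,\HH(\partial H_p\cap B_R)$ by the Caffarelli--De La Llave limit defining $\phi$. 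For a polyhedral set one then glues such constructions on tubular neighborhoods of each flat face, absorbing the mismatch on the $(d-2)$-dimensional edges via the upper bound in \eqref{boundF}. The general case of a set of finite perimeter follows by density of polyhedral sets in BV together with continuity of $\phi$ (a consequence of convexity, one-homogeneity and \eqref{boundF}) and a diagonal argument.

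The main obstacle I expect lies in the $\liminf$ inequality, since a priori a sequence $E_\eps$ need not resolve into plane-like minimizers at scale $\eps$. A clean way to handle this, used in \cite{CT,BrCP}, is to combine the blow-up with the equivalence between the cell-formula definition of $\phi$ and its characterization as the asymptotic average energy per unit area of a plane-like minimizer: on a macroscopic ball around $x_0$ the rescaled competitor is compared directly with a plane-like minimizer in direction $\nu^E(x_0)$, and the microscopic length scale of $E_\eps$ is forgotten in the limit $\eps\to 0$.
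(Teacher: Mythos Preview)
The paper does not actually prove this theorem: it is stated with a citation to \cite{CT} (and \cite{BrCP}) and no proof is given in the text. Your outline is essentially the argument of \cite{CT}: the $\Gamma$-$\liminf$ via a Fonseca--M\"uller blow-up reducing to the cell formula~\eqref{defphi}, and the $\Gamma$-$\limsup$ by building recovery sequences for half-spaces out of the level sets $\{\eps v_p(x/\eps)+p\cdot x>s\}$, then passing to polyhedral sets and finally to general finite-perimeter sets by density. So there is nothing to compare against in this paper, and your sketch matches the cited source.

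One small caution on the $\liminf$ step: after blow-up at $x_0$ you obtain a sequence converging to a half-space, but to invoke the cell problem~\eqref{defphi} you must produce an admissible competitor $v\in BV(\T)$, which requires matching periodic (or affine) boundary data on the boundary of a large cube. This is where the actual work lies and is handled in \cite{CT,BrCP} by a De Giorgi-type slicing/cut-off argument near the boundary of the cube, using~\eqref{boundF} to control the extra surface created. Your last paragraph alludes to this but does not quite name it; simply ``comparing with a plane-like minimizer'' is not enough unless you first force $E_\eps$ to agree with one near the boundary of the blow-up region.
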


\section{Strict convexity and differentiability properties of the stable norm}\label{secreg}
In this section we are going to study the differentiability and strict convexity of the stable norm $\phi$. It is a geometric analog of the minimal action functional of KAM theory whose differentiability has first been studied by Aubry and Mather \cite{aubry,mather} for geodesics on the two dimensional torus. The results of Aubry and Mather  have then been extended by Moser \cite{moser2}, in the framework of non-parametric integrands,  
and more recently by Senn \cite{S}. In this context, the study of the set of non-self-intersecting minimizers, which correspond to our plane-like minimizers satisfying the Birkhoff property has been performed by Moser and Bangert \cite{moser,Bangertmin}, 
whereas the proof of the strict convexity of the minimal action has been recently shown by Senn \cite{Sennstrict}. Another related problem is the homogenization of periodic Riemannian metrics (geodesics are objects of dimension one whereas in our problem the hypersurfaces are of codimension one). We refer to \cite{burago,BrButFra} for more information on this problem.

 We define the polar function of $F$ by $$F^\circ(x,z):=\sup_{\{F(x,p)\le1\}} z\cdot p$$ so that $(F^\circ)^\circ=F$. If we denote by $F^*(x,z)$ the convex conjugate of $F$ with respect to the second variable then $\{F^*(x,z)=0\}=\{F^\circ \le 1\}$. 
We will make the following additional hypotheses on $F$:
\begin{itemize}
\item $F$ is $C^{2,\alpha}(\R^d\times (\R^d\setminus\{0\}))$, 
\item $F$ is elliptic (that is $F(x,p)-C|p|$ is a convex function of $p$ for some $C>0$).
\end{itemize}
With these hypothesis we have \cite{ASS,Duzaar} \cite[Prop. 3.4]{CGN_PL}
\begin{proposition}\label{maxprinc}
For any plane-like Class A Minimizer $E$, the reduced boundary $\partial^* E$ is of class $C^{2,\alpha}$ and $\mathcal{H}^{d-3} (\partial E \setminus \partial^* E)=0$.
Let $E_1\subset E_2$ be two Class A Minimizers with connected boundary, 
then $\mathcal{H}^{d-3} (\partial E_1 \cap \partial E_2)=0$.
\end{proposition}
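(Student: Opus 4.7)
The plan is to first import the regularity of $\partial^*E$ from the theory of anisotropic elliptic minimizers. Concretely, at any $x\in\partial^*E$ the blow-ups converge to a half-space, and the $\varepsilon$-regularity theorem of \cite{ASS,Duzaar}, together with the $C^{2,\alpha}$-ellipticity of $F$, yields a local $C^{1,\alpha}$ graph representation of $\partial^*E$. One then bootstraps to $C^{2,\alpha}$ using the Euler--Lagrange equation $\Div \nabla_pF(x,\nu^E)=0$, which is a uniformly elliptic quasilinear PDE with $C^{1,\alpha}$ coefficients. The $\mathcal{H}^{d-3}$ bound on the singular set is obtained by Federer-type dimension reduction: at any singular point, tangent cones are themselves Class A minimizers of the frozen integrand $F(x_0,\cdot)$, and ellipticity rules out non-flat minimizing hypercones in the relevant low ambient dimensions.

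For the second assertion I would apply the strong maximum principle at each common regular point. Fix $x_0\in\partial^*E_1\cap\partial^*E_2$. Since $E_1\subset E_2$, the inner normals at $x_0$ necessarily coincide, so in a small ball around $x_0$ both $\partial E_1$ and $\partial E_2$ can be written as $C^{2,\alpha}$ graphs $u_1,u_2$ over the common tangent hyperplane, with $u_1\ge u_2$ for the appropriate orientation. Each $u_i$ satisfies the same uniformly elliptic quasilinear Euler--Lagrange equation for $F$, and the graphs touch at $x_0$. The strong maximum principle then forces $u_1\equiv u_2$ in a whole neighborhood of $x_0$; in particular $\partial^*E_1\cap\partial^*E_2$ is relatively open in $\partial^*E_1$.

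The conclusion then follows from a topological dichotomy. The connectedness of $\partial E_1$, combined with the codimension-at-least-two bound on the singular set, implies that the smooth part $\partial^*E_1$ is still connected. Removing from it the $\mathcal{H}^{d-3}$-negligible set $\partial^*E_1\cap(\partial E_2\setminus\partial^*E_2)$ leaves a connected set $A\subset\partial^*E_1$, and on $A$ the subset $A\cap\partial^*E_2$ is open (by the maximum principle step above) and closed (since $\partial E_2$ is closed and $A\setminus\partial E_2=A\setminus\partial^*E_2$ is open in $A$). Hence $A\cap\partial^*E_2$ is empty or equal to $A$. In the first case, $\partial E_1\cap\partial E_2$ is contained in the union of the two singular sets and the stated bound follows. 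In the second case, $\partial E_1\subset\partial E_2$ up to a negligible set; a short open-and-closed argument inside $E_2$, using $\partial E_1\cap\Int(E_2)=\emptyset$ together with $E_1\subset E_2$, then forces $E_1=E_2$, contradicting the implicit distinctness of the two minimizers.

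The main obstacle is not the local maximum principle itself but the careful bookkeeping of singular sets in the clopen argument: one must know a priori that the singular set has codimension at least two in $\partial E_1$ (which is precisely the first half of the proposition) so that its removal preserves connectedness of $\partial^*E_1$, and one must fix measure-theoretically consistent representatives of $E_1$ and $E_2$ before invoking connectivity of the interior. Once these points are secured, everything else reduces to standard PDE comparison and point-set topology.
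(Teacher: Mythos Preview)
The paper does not supply its own proof of this proposition; it is stated with references to \cite{ASS,Duzaar} for the regularity and singular-set estimate, and to \cite[Prop.~3.4]{CGN_PL} for the second assertion. Your sketch follows precisely the standard route taken in those sources: $\varepsilon$-regularity plus Schauder bootstrap for the $C^{2,\alpha}$ regularity of $\partial^*E$, Federer dimension reduction for the bound on the singular set, and then the strong maximum principle on the graphical Euler--Lagrange equation combined with a clopen argument on the connected regular part for the touching statement. So your proposal is correct and aligned with the cited proofs; the technical caveats you flag in your last paragraph (connectedness surviving removal of a codimension-two closed set, and fixing good representatives) are exactly the points one has to check carefully in \cite{CGN_PL}.
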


In the following we let
\[
\X:=\{ z \in L^{\infty}(\T,\R^d) \,:\, \Div z=0 \ , \ F^\circ(x,z(x))\le 1 \ a.e \}.
\]
Using arguments of convex duality, it is possible to characterize the stable norm as a support function \cite[Prop. 2.13]{CGN_PL}.
\begin{proposition}\label{propdual}
 There holds
\[\p (p)=\sup_{z \in \X} \left(\int_{\T} z\right)\cdot p\,.\]

Hence, the subgradient of $\p$ at $p\in\R^d$ is given by
\begin{equation}
\partial\p (p)\ =\ \left\{ \int_{\T} z \,:\, z \in \X \, ,\,  \left(\int_{\T} z\right)\cdot p=\p(p)\right\}.
\end{equation}
 \end{proposition}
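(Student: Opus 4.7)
The plan is to derive the identity from the cell formula \eqref{defphi} by Fenchel--Rockafellar duality. Since $F(x,\cdot)$ is convex and one-homogeneous with polar $F^\circ(x,\cdot)$, its Legendre conjugate in the second variable is the indicator of the sublevel set $\{F^\circ(x,\cdot)\le 1\}$; equivalently,
\[F(x,\xi)\ =\ \sup_{F^\circ(x,z)\le 1}\ z\cdot\xi.\]
I would recast the primal $\phi(p)=\inf_{v\in BV(\T)}\int_\T F(x,p+Dv)$ in Fenchel form, with $H(\xi)=\int_\T F(x,p+\xi(x))\,dx$ composed with the distributional gradient $A=D$, and $G\equiv 0$ on $BV(\T)$. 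A direct computation gives $H^*(z)=-(\int_\T z)\cdot p+\int_\T F^*(x,z)\,dx$, where the last integral vanishes when $F^\circ(x,z)\le 1$ a.e.\ and is $+\infty$ otherwise, while the adjoint constraint $A^*z=0$ is precisely $\Div z=0$ on $\T$.

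Hence the Fenchel--Rockafellar dual reads $\sup_{z\in\X}\,(\int_\T z)\cdot p$. Weak duality already yields $\phi(p)\ge\sup_{z\in\X}(\int_\T z)\cdot p$; the reverse inequality comes from the absence of a duality gap, since $H$ is finite and continuous on $L^1(\T,\R^d)$ thanks to the upper bound in \eqref{boundF}, so the standard qualification condition for Fenchel--Rockafellar holds. For the subgradient formula I would observe that the set $C:=\{\int_\T z : z\in\X\}\subset\R^d$ is convex and compact: $\X$ is bounded in $L^\infty$ by \eqref{boundF} and weak-* closed (the divergence constraint is a linear condition, while the pointwise bound $F^\circ(x,z)\le 1$ defines a weak-* closed convex set by lower semicontinuity of the associated convex integral functional). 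The identity just proved says that $\phi=\sigma_C$ is the support function of $C$, and the standard formula $\partial\sigma_C(p)=\{c\in C:c\cdot p=\sigma_C(p)\}$ translates directly into the stated description of $\partial\phi(p)$.

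The main obstacle is making the no-gap step rigorous in the $BV$ framework. Concretely one must choose a convenient pair of spaces in duality --- for instance $W^{1,1}(\T)$ or $C^1(\T)$ paired with $L^\infty$ divergence-free fields, extending to $BV$ by approximation and lower semicontinuity --- and verify the qualification hypothesis so that Fenchel--Rockafellar (or, equivalently, a Sion-type minimax argument exploiting the weak-* compactness of $\X$) applies. Once this technical framework is in place, the rest of the argument reduces to the computations above.
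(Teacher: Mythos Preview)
Your proposal is correct and follows the same route the paper indicates: the paper does not supply a self-contained proof here but states that the result follows from ``arguments of convex duality'' and cites \cite[Prop.~2.13]{CGN_PL}, and your Fenchel--Rockafellar derivation from the cell formula~\eqref{defphi}, together with the identification of $\phi$ as the support function of the compact convex set $\{\int_\T z:z\in\X\}$, is exactly the intended argument.
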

 
Since
\[\p \textrm{ is differentiable at } p \quad  \Longleftrightarrow \quad \partial \p (p) \textrm{ is a singleton},\]
Proposition \ref{propdual} tells us that checking the differentiability of $\phi$ at a given point $p$ is equivalent to checking whether 
for any  vectorfields $z_1,z_2\in X$,
\[\left(\int_{\T} z_1\right)\cdot p=\left(\int_{\T} z_2\right)\cdot p=\p(p) \qquad  \Longrightarrow \qquad \int_{\T} z_1 =\int_{\T} z_2. \]

We now introduce the notion of calibration. 
\begin{definition}
 We say that a vector field $z \in \X$ is a periodic calibration of a set $E$ of locally finite perimeter if, we have
\[ [z,\nu^E]= F(x,\nu^E) \qquad \HH\mesrest \partial^*E-a.e.\]
When no confusion can be made, by calibration we mean a periodic calibration.
\end{definition}
In the previous definition, $[z,\nu^E]$ has to be understood in the sense of Anzellotti but is roughly speaking
 $z(x)\cdot \nu^E(x)$ when it makes sense (see \cite{Anzellotti,CGN_reg}). By the differentiability of $F(x,\cdot)$, this implies that on a calibrated set, the value of $z$ is imposed since (see \cite{CGN_reg} for a more precise statement)
\begin{equation}\label{equalz}
z(x)=\nabla_p F(x,\nu^E)
\end{equation}
Using some arguments of convex analysis and the coarea formula, it is possible to prove the following relation between calibrations and minimizers of \eqref{defphi}.
\begin{proposition}
Let $z\in X$ a vector field such that
$\left(\int_\T z\right) \in \partial \p(p)$.
Then, for any minimizer $v_p$ of~\eqref{defphi},
\begin{equation}\label{EL}
[z, Dv_p+p]\ =\ F(x,Dv_p+p)\qquad |Dv_p+p|-a.e.
\end{equation}
and for every $s\in \R$, $z$ calibrates the set $E_s := \{ v_p + p\cdot x > s\}$. We say that such a vector field $z$ is a calibration in the direction $p$.  
\end{proposition}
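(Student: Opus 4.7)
The backbone of the argument is a pointwise Fenchel-type inequality. Since $\po(x,z(x))\le 1$ a.e.\ and $F(x,\cdot)$ is convex and one-homogeneous, one has $z(x)\cdot q\le F(x,q)$ for every $q\in\R^d$ and a.e.\ $x$. Via the Anzellotti pairing this lifts to an inequality of signed Radon measures: for any $w\in BV(\T)$,
\[
[z,Dw+p]\ \le\ F(x,Dw+p)\qquad\text{on }\T.
\]

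Apply this to $w=v_p$ and integrate. Using $\Div z=0$ together with the absence of boundary on the torus, the Anzellotti integration by parts formula gives $\int_{\T}[z,Dv_p]=0$, hence
\[
\int_{\T}[z,Dv_p+p]\;=\;p\cdot\int_{\T}z\;=\;\p(p),
\]
where the last equality follows from the hypothesis $\int_\T z\in\partial\p(p)$ combined with Proposition \ref{propdual}. On the other hand, minimality of $v_p$ in \eqref{defphi} yields $\int_{\T}F(x,Dv_p+p)=\p(p)$. Two measures in pointwise order with equal total mass must coincide; hence the densities agree $|Dv_p+p|$-a.e., which is exactly \eqref{EL}.

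For the calibration claim, slice by coarea: $|Dv_p+p|=\int_{\R}|D\car{E_s}|\,ds$, and both measures in \eqref{EL} disintegrate coherently along the level sets $E_s$. The a.e.\ equality obtained above therefore yields, for Lebesgue a.e.\ $s\in\R$, the pointwise identity $[z,\nu^{E_s}]=F(x,\nu^{E_s})$ $\HH\mesrest\partial^*E_s$-a.e., i.e.\ $z$ calibrates $E_s$ for a.e.\ $s$.

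The delicate point — the main obstacle — is promoting this to \emph{every} $s\in\R$. By the proposition recalled just after \eqref{defphi}, each $E_s$ is a Class A Minimizer, and Proposition \ref{maxprinc} then gives $C^{2,\alpha}$ regularity of $\partial^*E_s$. Given any $s$, pick a sequence $s_n\to s$ inside the a.e.-good set; monotonicity of $s\mapsto E_s$ together with uniform regularity estimates yields local $C^1$ convergence of $\partial^*E_{s_n}$ to $\partial^*E_s$, hence of the normals $\nu^{E_{s_n}}\to\nu^{E_s}$. By \eqref{equalz}, on each calibrated boundary the trace of $z$ is prescribed by the continuous field $\nabla_p F(x,\nu^E)$, and this continuity lets the calibration identity $[z,\nu^{E_{s_n}}]=F(x,\nu^{E_{s_n}})$ pass to the limit, yielding the calibration of $E_s$. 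The most technical point is the rigorous use of the Anzellotti pairing on $\T$ (in particular the boundary-free divergence identity) and the $C^1$ convergence of level-set boundaries; the pointwise duality itself is essentially algebraic.
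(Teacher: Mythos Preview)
Your argument tracks the paper's own hint (``convex analysis and the coarea formula'') exactly: the Fenchel inequality $[z,Dv_p+p]\le F(x,Dv_p+p)$, the integration by parts on $\T$ using $\Div z=0$ to get $\int_\T[z,Dv_p+p]=p\cdot\int_\T z=\p(p)$, and the matching of total masses to force equality of the measures is precisely the intended mechanism for \eqref{EL}. The coarea step to obtain calibration of $E_s$ for a.e.\ $s$ is likewise the standard route. The paper gives no detailed proof here, so on the backbone you are aligned with it.

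The one place where your write-up is loose is the promotion to \emph{every} $s$. Saying that ``continuity lets the calibration identity $[z,\nu^{E_{s_n}}]=F(x,\nu^{E_{s_n}})$ pass to the limit'' skips a genuine step: knowing the Anzellotti normal trace of $z$ on each $\partial E_{s_n}$ does not, by itself, identify the trace of $z$ on the limit surface $\partial E_s$, because the weak normal trace is not a pointwise object one can simply carry along a $C^1$-converging family of hypersurfaces. The clean way to close this is at the level of measures: from $\chi_{E_{s_n}}\to\chi_{E_s}$ in $L^1_{\loc}$ and $\Div z=0$ one gets $[z,D\chi_{E_{s_n}}]\rightharpoonup[z,D\chi_{E_s}]$ weakly*; on the other hand the $C^1$ convergence of boundaries (or, more robustly, the uniform Class~A minimality of the $E_s$, which forces $\E(E_{s_n},B)\to\E(E_s,B)$ for a.e.\ ball $B$) gives $F(x,\nu^{E_{s_n}})\HH\mesrest\partial^*E_{s_n}\rightharpoonup F(x,\nu^{E_s})\HH\mesrest\partial^*E_s$. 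Since the left-hand sides coincide for each $n$, so do the limits, and you conclude $[z,\nu^{E_s}]=F(x,\nu^{E_s})$ on $\partial^*E_s$. Note that this second route avoids Proposition~\ref{maxprinc} altogether and works under the sole assumptions \eqref{boundF}, which is presumably why the paper's hint mentions only convex analysis and coarea.
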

Equation \eqref{EL} is the Euler-Lagrange equation associated to \eqref{defphi}. Notice that thanks to \eqref{equalz}, the value of any calibration in the direction $p$ is fixed on  $\partial E_s$. 
Hence, it is reasonable to expect that if these sets fill a big portion of the space, the average on the torus of any calibration will be fixed which would
 imply the differentiability of $\phi$.  One of the important consequences of calibration is that it implies an ordering of the plane-like minimizers.
\begin{proposition}\label{order}
Suppose that $z\in \X$ calibrates two  plane-like 
minimizers $E_1$ and $E_2$ with connected boundaries. Then, either $E_1\subset E_2$, or $E_2\subset E_1$. As a consequence $\mathcal{H}^{d-3}(\partial E_1  \cap \partial E_2)=0$.
\end{proposition}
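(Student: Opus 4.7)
The plan is to promote the two calibrated sets to a nested sandwich of calibrated Class~A Minimizers and then apply Proposition~\ref{maxprinc}. First I would show that $E_1\cap E_2$ and $E_1\cup E_2$ are also calibrated by $z$. Since, up to $\mathcal{H}^{d-1}$-null sets, $\partial^*(E_1\cap E_2)\cup\partial^*(E_1\cup E_2)=\partial^* E_1\cup\partial^* E_2$ with matching normals, the Anzellotti pairing identity gives
\begin{equation*}
\int_{\partial^*(E_1\cap E_2)\cup\partial^*(E_1\cup E_2)}\bigl(F(x,\nu)-[z,\nu]\bigr)\,d\mathcal{H}^{d-1}
\;=\; \int_{\partial^* E_1\cup\partial^* E_2}\bigl(F(x,\nu)-[z,\nu]\bigr)\,d\mathcal{H}^{d-1} \;=\;0,
\end{equation*}
the last equality by the calibration of $E_1$ and $E_2$. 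Since $F^\circ(x,z)\le 1$ makes the integrand nonnegative, it vanishes $\mathcal{H}^{d-1}$-a.e., and therefore both $E_1\cap E_2$ and $E_1\cup E_2$ are calibrated by $z$, hence Class~A Minimizers.

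Assume for contradiction that neither $E_1\subset E_2$ nor $E_2\subset E_1$, so that the inclusions $E_1\cap E_2\subsetneq E_1\subsetneq E_1\cup E_2$ are strict. Granting connectedness of all the boundaries involved (the main technical point, addressed below), Proposition~\ref{maxprinc} applied to each inclusion yields
\begin{equation*}
\mathcal{H}^{d-3}\bigl(\partial(E_1\cap E_2)\cap\partial E_1\bigr)\;=\;\mathcal{H}^{d-3}\bigl(\partial E_1\cap\partial(E_1\cup E_2)\bigr)\;=\;0.
\end{equation*}
Now $\partial E_1\cap\mathrm{int}(E_2)\subset\partial(E_1\cap E_2)\cap\partial E_1$ and $\partial E_1\cap\mathrm{int}(\R^d\setminus E_2)\subset\partial E_1\cap\partial(E_1\cup E_2)$, and each of these is relatively open in the $(d-1)$-dimensional $C^{2,\alpha}$ hypersurface $\partial^* E_1$: a nonempty one would have infinite $\mathcal{H}^{d-3}$-measure. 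Both sets must therefore be empty, which forces $\partial E_1\subset\partial E_2$ and, symmetrically, $\partial E_1=\partial E_2$. Since $E_1$ and $E_2$ are plane-like sharing a common half-space at infinity, this forces $E_1=E_2$, contradicting the assumption. The ``as a consequence'' part is then just Proposition~\ref{maxprinc} applied to the resulting ordered pair.

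The main obstacle is that $\partial(E_1\cap E_2)$ and $\partial(E_1\cup E_2)$ need not be connected a priori. A clean way to bypass this is to replace the global use of Proposition~\ref{maxprinc} by its underlying strong maximum principle: by~\eqref{equalz} and the ellipticity of $F$, which makes $p\mapsto\nabla_p F(x,p)$ injective on $\S^{d-1}$, any common regular point $x\in\partial^* E_1\cap\partial^* E_2$ must satisfy $\nu^{E_1}(x)=\nu^{E_2}(x)$, and the strong maximum principle for the quasilinear elliptic PDE satisfied by calibrated hypersurfaces then makes $\partial^* E_1\cap\partial^* E_2$ relatively open in $\partial^* E_1$. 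Combined with the connectedness of $\partial E_1$ and the negligible size of the singular sets, this yields the dichotomy ``$\partial E_1=\partial E_2$ or $\partial^* E_1\cap\partial^* E_2=\emptyset$'', and the plane-like structure produces the ordering in the second case as above.
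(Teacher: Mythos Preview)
The paper states Proposition~\ref{order} without proof; it is quoted from the companion article~\cite{CGN_PL}, so there is nothing in the present text to compare your argument against directly.

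Your proposal has the right architecture and is essentially the standard proof. The calibration of $E_1\cap E_2$ and $E_1\cup E_2$ is the correct opening move; in fact it follows pointwise from the inclusion $\partial^*(E_1\cap E_2)\cup\partial^*(E_1\cup E_2)\subset\partial^*E_1\cup\partial^*E_2$ (up to an $\mathcal H^{d-1}$-null set, with inherited normals), so the integral identity you wrote is not even needed. You correctly flag the connectedness of $\partial(E_1\cap E_2)$ and $\partial(E_1\cup E_2)$ as the genuine obstacle to invoking Proposition~\ref{maxprinc} on the nested pair, and your fallback via the strong maximum principle is the right fix: \eqref{equalz} together with ellipticity (which makes $\nabla_pF(x,\cdot)$ injective on~$\S^{d-1}$) forces $\nu^{E_1}=\nu^{E_2}$ at any common regular point, and the Hopf maximum principle for the $F$-minimal-surface operator then makes $\partial^*E_1\cap\partial^*E_2$ relatively open in~$\partial^*E_1$.

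The one step that still deserves a sentence is the passage from ``relatively open'' to the dichotomy. You need that $\partial^*E_1$ itself is connected (not merely $\partial E_1$), and that the residual set $\partial^*E_1\cap(\partial E_2\setminus\partial^*E_2)$, having $\mathcal H^{d-3}$-measure zero, cannot separate the $(d-1)$-dimensional $C^{2,\alpha}$ manifold $\partial^*E_1$ into the two open pieces $\partial^*E_1\cap\partial^*E_2$ and $\partial^*E_1\setminus\partial E_2$. Both facts hold because the singular sets have codimension at least two in the regular hypersurface, but they are not entirely trivial and should be stated. Once you have the dichotomy, in the case $\partial^*E_1\cap\partial^*E_2=\emptyset$ the connected hypersurface $\partial^*E_1$ lies entirely in $\textup{int}(E_2)$ or entirely in $\textup{int}(\R^d\setminus\overline{E_2})$, and the ordering follows; the final clause is then Proposition~\ref{maxprinc}, as you say.
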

Using the cell formula we can already prove the strict convexity of $\phi^2$. 
\begin{theorem}\label{convex}  
The function $\phi^2$ is strictly convex.
%or, equivalently, the set $\{\p(p)\le 1\}$ is strictly convex.
\end{theorem}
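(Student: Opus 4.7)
I would first show that the strict convexity of $\phi^2$ is equivalent to the absence of a non-trivial line segment on the unit sphere $\{\phi=1\}$. Indeed, if $p_1\ne p_2$ lie on a segment on which $\phi^2$ fails to be strictly convex, then using the one-homogeneity of $\phi$ one checks that $\phi$ must be affine on $[p_1,p_2]$ with $\phi(p_1)=\phi(p_2)$; after rescaling we may assume $\phi\equiv 1$ on $[p_1,p_2]$. Moreover $p_1$ and $p_2$ must be linearly independent: if $p_2=\lambda p_1$ with $\lambda>0$ then homogeneity forces $\lambda=1$, while $\lambda<0$ forces $0\in[p_1,p_2]$, contradicting $\phi\equiv 1$.

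\textbf{A common calibration.} Arguing by contradiction, assume such $p_1,p_2$ exist and set $p:=(p_1+p_2)/2$. By Proposition \ref{propdual} pick $z\in\X$ with $\int_\T z\cdot p=\phi(p)=1$. Since $\int_\T z\cdot p_i\le \phi(p_i)=1$ for $i=1,2$ and the average of the two equals $\int_\T z\cdot p=1$, both inequalities must be equalities, so $z$ is simultaneously a calibration in the directions $p_1$ and $p_2$. The proposition preceding Proposition \ref{order} then provides minimizers $v_1,v_2$ of \eqref{defphi} in these directions such that $z$ calibrates every superlevel set $E^i_s:=\{v_i+p_i\cdot x>s\}$. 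Replacing, if needed, each $E^i_s$ by a connected component with connected boundary in the spirit of Caffarelli--De La Llave's construction (the plane-like property \eqref{PLexist} guarantees the existence of such a component with the same universal $M$), Proposition \ref{order} forces any two of these sets to be ordered by inclusion.

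\textbf{Geometric contradiction.} By \eqref{PLexist}, each $E^1_s$ contains a half-space $\{x\cdot p_1/|p_1|>a_s\}$ and each $E^2_t$ is contained in a half-space $\{x\cdot p_2/|p_2|>b_t\}$ for appropriate constants. The ordering (WLOG) $E^1_s\subset E^2_t$ then yields $\{x\cdot p_1/|p_1|>a_s\}\subset\{x\cdot p_2/|p_2|>b_t\}$, which is possible only if $p_1$ and $p_2$ are positively parallel, contradicting their linear independence. The step I expect to be most delicate is precisely arranging the connected-boundary hypothesis needed to apply Proposition \ref{order}: the superlevel sets of a BV minimizer of \eqref{defphi} are not automatically connected, so one has to extract a suitable plane-like connected component on which $z$ still acts as a calibration, and verify that the half-space bounds above survive this selection.
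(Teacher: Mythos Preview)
Your argument is correct and follows the same overall strategy as the paper: exhibit a single $z\in\X$ that simultaneously calibrates plane-like minimizers in two linearly independent directions, and then invoke Proposition~\ref{order} to reach a contradiction. The only cosmetic difference is how the common calibration is produced: you pull it out of the dual formula in Proposition~\ref{propdual}, while the paper runs the chain of inequalities through the cell formula \eqref{defphi} (showing that if $\phi(p_1+p_2)=\phi(p_1)+\phi(p_2)$ then $v_{p_1}+v_{p_2}$ is itself a minimizer, whence the calibration $z_p$ for the direction $p=p_1+p_2$ also calibrates the families $\{v_{p_i}+p_i\cdot x>s\}$).

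Regarding the step you flag as delicate: no extraction of connected components is needed. The superlevel sets $\{v_{p_i}+p_i\cdot x>s\}$ satisfy the strong Birkhoff property (this is noted right after Definition~\ref{defBirk}), and the proposition immediately preceding that definition states that any Class~A minimizer with the Birkhoff property already has connected boundary. Hence Proposition~\ref{order} applies to them directly, and your half-space comparison goes through without modification.
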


\begin{proof}
Let $p_1,p_2$, with $p_1\neq p_2$, and  
let $p=p_1+p_2$. We want to show that, if $\p(p)=\p(p_1)+\p(p_2)$, then 
$p_1$ is proportional to $p_2$, which gives the thesis. 

Indeed, we have
\begin{align*}
\p(p) &= \int_\T [z_p, p+Dv_p]\\
&= \, \int_\T F(x,p+Dv_{p})\\
&\le\, \int_\T F(x,p+Dv_{p_1}+Dv_{p_2}) \\
&\le\, \int_\T F(x,p_1+Dv_{p_1})+F(x,p_2+Dv_{p_2})\\ 
&=\, \p(p_1)+\p(p_2)\,.
\end{align*}
Since $\p(p)=\p(p_1)+\p(p_2)$, it follows that $v_{p_1}+v_{p_2}$ is also a minimizer of \eqref{defphi} and,
%(hence we can take $v_p=v_{p_1}+v_{p_2}$ in the previous equation), 
in particular, $z_p$ satisfies
\[
[z_p,p_1+Dv_{p_1}]+[z_p,p_2+Dv_{p_2}]\ =\ 
F(x,p_1+Dv_{p_1})+F(x,p_2+Dv_{p_2})
\]
$(|p_1+Dv_{p_1}|+|p_2+Dv_{p_2}|)$-a.e., so that 
\[
[z_p, p_i+Dv_{p_i}]\ =\ F(x,p_i+Dv_{p_i}) \qquad i\in\{1,2\}\,.
\]
This means that $z_p$ is a calibration for the plane-like minimizers 
\[
\{v_p +p\cdot x\ge s\}\ ,\ \{v_{p_1}+p_1 \cdot x\ge s\}\ \textup{ and }\ \{v_{p_2} +p_2\cdot x\ge s\}
\]
for all $s\in\R$. By Proposition \ref{order}, it follows that they are included in one another
which is possible only if $p_1$ is proportional to $p_2$.
\end{proof}

We can also show that $\phi$  is differentiable in the totally irrational directions.
 
\begin{proposition}\label{totir}
Assume $p$ is totally irrational.
Then for any two calibrations $z,z'$ in the direction $p$,
$\int_\T z\,dx=\int_\T z'\,dx$. As a consequence, 
$\partial \p(p)$ is a singleton and $\p$ is differentiable at $p$.
\end{proposition}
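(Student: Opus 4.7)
The plan is to compare two calibrations $z_1, z_2$ in the direction $p$ by forming their divergence-free difference $w := z_1 - z_2$ and showing that $\int_\T w\,dx = 0$; by Proposition~\ref{propdual} this gives $\partial\phi(p) = \{\int_\T z_1\}$ and hence differentiability at $p$. Fix a minimizer $v_p$ of \eqref{defphi} and set $u := v_p + p\cdot x$. The Euler--Lagrange identity \eqref{EL} combined with \eqref{equalz} forces $z_1(x) = z_2(x) = \nabla_p F(x, \nu^{E_s}(x))$ on $\partial^* E_s$ for every $s\in\R$; in particular, the (Anzellotti) trace of $w$ vanishes on every reduced boundary $\partial^* E_s$.

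Next I would split $\T$ (up to a Lebesgue null set) into an ``active'' part $A := \{x \in \T : \nabla u(x) \neq 0\}$ and a ``flat'' part $N := \T \setminus A$. For Lebesgue-a.e.\ $x \in A$, the point $x$ lies on $\partial^* E_{u(x)}$ and so $w(x) = 0$ by the previous step, giving $\int_A w\,dx = 0$ immediately. On $N$, the goal is to show it is (up to a null set) an open disjoint union of connected components $U_i$ on each of which $u \equiv c_i$. Granting this, the topological boundary $\partial U_i$ lies inside $\partial E_{c_i}$, which is $C^{2,\alpha}$ by Proposition~\ref{maxprinc}, and the trace of $w$ there also vanishes. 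Since $\Div w = 0$ distributionally on $\T$, integration by parts against the coordinate functions yields $\int_{U_i} w_j\,dx = \int_{\partial U_i} x_j\,(w\cdot\nu)\,d\HH = 0$; summing over $i$ gives $\int_N w\,dx = 0$, and combining the contributions produces $\int_\T w\,dx = 0$.

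The main obstacle is the structural claim on $N$: ruling out a positive-measure set on which the approximate gradient of $u$ vanishes while the singular part of $Du$ makes $u$ vary in a fractal manner (so that $N$ would not decompose into honest open plateaus and the integration-by-parts step would fail). This is where the totally irrational hypothesis genuinely enters. For such $p$ the set $\{p\cdot k : k\in\Z^d\}$ is dense in $\R$, and combined with the Birkhoff ordering of plane-like minimizers (Proposition~\ref{order}) this density lets one select $v_p$ so that every point at which $\nabla u$ vanishes lies inside a genuine open plateau $U_i$, whose boundary then inherits the $C^{2,\alpha}$ regularity of $\partial E_{c_i}$. In the complementary situation $q\cdot p = 0$ for some nonzero $q\in\Z^d$, plane-like minimizers can slide along $q$ and produce genuine ``lamination gaps'', which is precisely the mechanism by which $\phi$ becomes non-differentiable in the $q$-direction, and which disappears under the totally irrational assumption.
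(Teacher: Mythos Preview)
Your proposal shares the paper's core mechanism---$w = z_1 - z_2$ is divergence-free with vanishing normal trace on each $\partial^* E_s$, so one should integrate by parts on the plateaus of $u = v_p + p\cdot x$---but the treatment of the flat set $N$ has a real gap. The identity $\int_{U_i} w_j\,dx = \int_{\partial U_i} x_j\,[w,\nu]\,d\HH$ uses the coordinate $x_j$, which is not a function on $\T$; to make sense of it you must lift $U_i$ to $\R^d$ and know that it has \emph{finite volume there}, and you never establish this. Your proposed justification (density of $\{p\cdot k:k\in\Z^d\}$ plus Birkhoff ordering ``lets one select $v_p$'' so that $N$ is a union of genuine open plateaus with $C^{2,\alpha}$ boundary) is not an argument: no selection mechanism is exhibited, and in fact no such regularity of $N$ is needed at all.

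The paper sidesteps both difficulties in one stroke by working directly with the full level sets $C_s = \{u = s\} \subset \R^d$, without any decomposition into open or connected pieces. Total irrationality enters concretely: by periodicity of $v_p$ one has $C_s - q = C_{s - p\cdot q}$, and since $p\cdot q \neq 0$ for every nonzero $q\in\Z^d$, these are pairwise \emph{distinct} level sets, hence disjoint. Therefore $|C_s| = \sum_{q\in\Z^d} |(Q+q)\cap C_s| = \sum_{q\in\Z^d} |Q\cap (C_s - q)| \le |Q| = 1$. This finite-volume bound is exactly what legitimizes the divergence theorem on $C_s \subset \R^d$ against the test function $x_i$, yielding $\int_{C_s} w\cdot e_i\,dx = -\int_{C_s} x_i\,\Div w\,dx = 0$; summing over the countably many $s$ with $|C_s|>0$ then gives $\int_Q w = 0$. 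The moral is that total irrationality is not used to manufacture regularity of $v_p$ or openness of the plateaus, but simply to force each level set to have finite total mass in $\R^d$, which is precisely the hypothesis your integration-by-parts step was missing.
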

\begin{proof}
The fundamental idea is that since $p$ is totally irrational, even if the levelsets $\{v_p+p\cdot x>s\}$ do not fill the whole space, the remaining holes must have finite volume and therefore do not count in the average.\\

 Consider $z,z'$ two calibrations and a solution $v_p$ of~\eqref{defphi}.

{ } Let us show that, for any $s$,
\begin{equation}
\int_{\{v_p+p\cdot x=s\}} (z(x)-z'(x))\,dx\ =\ 0.
\end{equation}
Let $C_s:=\{x\,:\, v_p(x)+p\cdot x=s\}$ then
$\partial C_s = \partial \{v_p+p\cdot x>s\}\cup \partial \{v_p+p\cdot x\ge s\}$.
Moreover, all the $C_s$ have zero Lebesgue measure except for a countable number of values.
Consider such a value $s$. Since $z$ and $z'$ calibrate $C_s$ which is a   plane-like minimizer we have $[z,\nu^{C_s}]=[z',\nu^{C_s}]$ on $\partial^* C_s$.

{ } Then, we observe that the sets $C_s^q=Q\cap (C_s-q)$, $q\in\Z^d$, are
all disjoint since $p$ is totally irrational and since all the $C_s^q$ are calibrated by $z$, so that their measures sum up to less than $1$.

{ }Let $e_i$ be a vector of the canonical basis of $\R^d$ then by the divergence Theorem (where the integration by parts can by justified thanks to $|C_s|\le1$) we compute
\begin{equation}\label{psiRR}
\int_{C_s}(z(x)-z'(x))\cdot e_i\,dx
\,=\, -\int_{C_s} x_i \, \Div(z(x)-z'(x))\,dx=0 \ ,
\end{equation}
which gives our claim.

In particular, we obtain that $\int_{\R^d}(z-z')\,dx =\sum_{s} \int_{C_s} (z-z')\,dx=0$ hence $\int_Q (z-z')\,dx=0$.
\end{proof}
When $p$ is not totally irrational, we have to consider a slightly bigger class of plane-like minimizers than those obtained as $\{v_p+p\cdot x>s\}$ for $v_p$ a minimizer of \eqref{defphi} and $s\in \R$. Indeed, we must consider all the plane-like minimizers which are maximally periodic. 
\begin{definition}\label{defBirk}
Following \cite{HJG,S,UNIBan} we give the following definitions:
\begin{itemize}
\item we say that $E\subset\R^d$ satisfies the Birkhoff property if,
for any $q\in\Z^d$, either $E\subseteq E+q$ or $E+q\subseteq E$;
\item we say that $E$ satisfies the strong Birkhoff property
in the direction $p\in\Z^d$ if $E\subseteq E+q$ when $p\cdot q\le 0$
and $E+q\subseteq E$ when $p\cdot q\ge 0$.
\end{itemize}
  We will let $\CA(p)$ be the set of all the plane-like minimizers in the direction $p$ which satisfy the strong Birkhoff property
\end{definition}
Notice that the sets $\{v_p+p\cdot x>s\}$ have the strong Birkhoff property. It can be shown that the sets of this form correspond exactly to the recurrent plane-like minimizers which are those which can be
 approximated by below or by above by entire translations of themselves (see \cite[Prop. 4.18]{CGN_PL}). For sets satisfying the Birkhoff property there holds \cite[Lem. 4.13, Prop. 4.14, Prop 4.15]{CGN_PL}.
\begin{proposition}
 Let $E$ be a Class A minimizer with the Birkhoff property: then it is a plane-like minimizer (with a constant $M$ just depending on $c_0$ and $d$), calibrated and $\partial E$ is connected.
\end{proposition}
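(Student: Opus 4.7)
The plan is to start from the Birkhoff property to extract a well-defined normal direction $p$, to use uniform density estimates for Class A minimizers to obtain the plane-like bound, then to organize the family of integer translates of $E$ into a lamination from which a calibration can be built, and finally to deduce connectedness from Proposition~\ref{maxprinc}.

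First I would analyze the semigroup $P^+:=\{q\in\Z^d:E+q\subseteq E\}$. By the Birkhoff property, $P^+\cup(-P^+)=\Z^d$, and $P^+$ is closed under addition. After factoring out the subgroup $P^+\cap(-P^+)$ of periods of $E$ (which, if nontrivial, only reduces the problem to a lower effective dimension), a Hahn--Banach type argument produces $p\in\R^d\setminus\{0\}$ such that $p\cdot q\ge 0$ for $q\in P^+$ and $p\cdot q\le 0$ for $q\in -P^+$, so that $E$ satisfies the strong Birkhoff property in direction $p$. To upgrade this to a plane-like estimate, I would exploit the uniform density estimates for Class A minimizers, which depend only on $c_0$ and $d$ through \eqref{boundF}: if the oscillation of $\partial E$ in the direction $p/|p|$ exceeded some $M=M(c_0,d)$, one could find a ball $B$ of definite size in which $|B\cap E|$ and $|B\setminus E|$ are both comparable to $|B|$, and the relative isoperimetric inequality together with a competitor obtained by an integer translation ordered with $E$ via strong Birkhoff would contradict Class A minimality.

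For the calibration I would use the strong Birkhoff property to view the translates $\{E+q\}_{q\in\Z^d}$ as a totally ordered family whose reduced boundaries are pairwise $\mathcal{H}^{d-1}$-essentially disjoint by Proposition~\ref{maxprinc}. On each $\partial^*(E+q)$ I set $z(x):=\nabla_p F(x,\nu^{E+q})$; the Fenchel identity then gives both $F^\circ(x,z)=1$ and $[z,\nu^{E+q}]=F(x,\nu^{E+q})$ there, while $\Z^d$-periodicity of $z$ follows from the periodicity of $F$ together with the translation action on the family. It remains to extend $z$ across the gaps between consecutive translates while preserving both $\Div z=0$ distributionally and $F^\circ(x,z)\le 1$; this would be done by solving an auxiliary divergence problem on each gap, whose solvability reduces to a zero total-flux condition that follows from the Euler--Lagrange equation satisfied on the two bounding interfaces and from $\Z^d$-periodicity.

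Finally, connectedness of $\partial E$ would follow by contradiction. A nontrivial decomposition $\partial E=\Gamma_1\sqcup\Gamma_2$ into two closed pieces would allow the construction of a second Class A minimizer $E'$, strictly ordered with $E$ and sharing $\Gamma_1$ with $\partial E$, violating the $\mathcal{H}^{d-3}$-negligibility of $\partial E_1\cap\partial E_2$ granted by Proposition~\ref{maxprinc}. I expect the hardest step to be the third one: producing in each gap a divergence-free vector field that stays in the unit ball of $F^\circ(x,\cdot)$ is genuinely delicate, since one must exploit simultaneously the $C^{2,\alpha}$ regularity of the two bounding plane-like interfaces, the Euler--Lagrange equation they satisfy, and the strong Birkhoff periodicity in order to close the auxiliary boundary-value problem.
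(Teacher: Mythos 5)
This proposition is quoted from \cite{CGN_PL} (Lem.~4.13, Prop.~4.14, Prop.~4.15) and not proved in the present paper, so I compare your plan with the argument there. Your steps for the direction $p$ (the semigroup $P^+$ and a separating functional), for the plane-like bound, and for connectedness are in the right spirit: the standard route to the bound $M=M(c_0,d)$ is to combine the lower density estimate for Class A minimizers with the Birkhoff ordering of integer translates, so that an oscillation of size $M$ forces $\gtrsim M R^{d-1}$ disjoint unit balls meeting $\partial E$ inside $B_R$, contradicting the upper perimeter bound $\E(E,B_R)\le C R^{d-1}$; your ``one ball plus a translated competitor'' variant is vaguer but aims at the same mechanism. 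Connectedness via two nested Class A minimizers sharing a boundary piece of positive $\HH$-measure, against Proposition~\ref{maxprinc}, is also essentially the argument used.

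The genuine gap is in your construction of the calibration. Prescribing $z=\nabla_pF(x,\nu)$ on the reduced boundaries of the translates and then ``solving an auxiliary divergence problem on each gap'' whose solvability ``reduces to a zero total-flux condition'' does not produce an element of $\X$: the zero-flux condition only yields \emph{some} divergence-free extension with the right normal trace, with no control whatsoever on the constraint $F^\circ(x,z)\le 1$ inside the gap. That pointwise unit-ball constraint is the entire content of being a calibration, and its attainability in a gap is equivalent, by duality, to a joint minimality property of the two bounding interfaces --- i.e.\ you have restated the problem rather than solved it, as you half-acknowledge. The actual construction is global, not local: one obtains a periodic $z\in\X$ with $\bigl(\int_\T z\bigr)\cdot p=\p(p)$ directly from convex duality applied to the cell problem \eqref{defphi} (this is Proposition~\ref{propdual}, where the supremum is attained by weak-$*$ compactness of $\X$); such a $z$ automatically calibrates all level sets $\{v_p+p\cdot x>s\}$, and Theorem~\ref{calibrallPL} upgrades this to every plane-like minimizer with the strong Birkhoff property. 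For Birkhoff minimizers that are not strong Birkhoff (the heteroclinics sitting inside gaps), the calibration is obtained as a weak-$*$ limit of calibrations $z_n$ in nearby directions $p_n\to p$, as in the discussion of $H_+$ and $z_+$ in Section~\ref{secreg}, not by a local boundary-value problem. Without this duality-plus-limit mechanism, the ``calibrated'' part of the statement is not established.
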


 For sets satisfying the Strong Birkhoff property, it can be further proven \cite[Th. 4.19]{CGN_PL}.

\begin{theorem}\label{calibrallPL}
Let  $z$ be a calibration in the direction $p$,
then $z$ calibrates every plane-like minimizer
with the strong Birkhoff property.
\end{theorem}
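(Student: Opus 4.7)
Let $E\in\CA(p)$ and fix a minimizer $v_p$ of~\eqref{defphi}. The strategy is to sandwich $E$ between two level sets of $v_p+p\cdot x$, both already calibrated by $z$, and then to transfer the calibration to $E$ via a local minimality argument.

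\emph{Step 1 (ordering).} By the proposition stated immediately before the theorem (Birkhoff $\Rightarrow$ calibrated), $E$ is itself calibrated by some $z_E\in\X$. A divergence/periodicity computation on the balls $B_R$, using $\Div z_E=0$, the calibration identity $[z_E,\nu^E]=F(x,\nu^E)$, and the plane-like structure of $E$, shows $(\int_\T z_E)\cdot p=\phi(p)$, so $\int_\T z_E \in \partial\phi(p)$. Then, by the proposition defining calibrations in direction $p$, $z_E$ also calibrates every $E_s:=\{v_p+p\cdot x>s\}$. Applying Proposition~\ref{order} with the common calibration $z_E$ now forces $E\subseteq E_s$ or $E_s\subseteq E$ for every $s\in\R$.

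\emph{Step 2 (sandwich).} By the ordering and the monotonicity $s\mapsto E_s$ decreasing, the quantities
$$s^* := \sup\{s\in\R : E_s\supseteq E\} = \inf\{s\in\R : E_s\subseteq E\}$$
coincide, and passing to monotone limits of the $E_t$'s gives
$$E_{s^*}\ \subseteq\ E\ \subseteq\ \{v_p+p\cdot x\ge s^*\}\ =:\ E^-,$$
where both $E_{s^*}$ and $E^-$ are calibrated by $z$ (the latter as a decreasing intersection of calibrated sets). If the plateau $\Omega^* := \{v_p+p\cdot x=s^*\}$ has Lebesgue measure zero, then $E=E_{s^*}$ up to a null set and $z$ calibrates $E$ directly.

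\emph{Step 3 (plateau case).} Otherwise $|\Omega^*|>0$ and $\partial^* E$ may genuinely sit in $\overline{\Omega^*}$. Applying the divergence theorem for $z$ on $(E\setminus E_{s^*})\cap B_R$ and using $[z,\nu^{E_{s^*}}]=F(x,\nu^{E_{s^*}})$ on $\partial^* E_{s^*}$, we get
$$\int_{\partial^* E\cap B_R}[z,\nu^E]\,d\HH\ =\ \E(E_{s^*},B_R)\ +\ O(R^{d-2}),$$
the error being controlled by the plane-like thickness $\le 2M$ of $E\setminus E_{s^*}$ and $\|z\|_\infty\le c_0^{-1}$. Combined with $\E(E,B_R)=\E(E_{s^*},B_R)+O(R^{d-2})$ (both sets share the plane-like energy density $\phi(p)/|p|$), this yields
$$\int_{\partial^* E\cap B_R}\bigl(F(x,\nu^E)-[z,\nu^E]\bigr)\,d\HH\ =\ O(R^{d-2})$$
with pointwise nonnegative integrand. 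To upgrade this integrated identity to the pointwise calibration $[z,\nu^E]=F(x,\nu^E)$ on $\partial^* E$, one reruns the same divergence computation on an arbitrary bounded smooth test region $\Omega$ instead of $B_R$, comparing $E$ against the competitor that coincides with $E_{s^*}$ in $\Omega$ and with $E$ outside; the local minimality of $E$ combined with the calibration of $E_{s^*}$ forces the pointwise equality $\HH$-a.e.\ on $\partial^* E\cap\Omega$, and letting $\Omega$ exhaust $\R^d$ concludes the proof.

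\emph{Main obstacle.} The delicate point is Step 3: inside $\Omega^*$ one has $Dv_p+p=0$, so the Euler--Lagrange identity~\eqref{EL} is vacuous and gives no pointwise control on $z$ in that region. The calibration of $E$ must therefore be recovered purely from $E$'s own local minimality and the reference calibration on $\partial^* E_{s^*}$. The strong Birkhoff assumption enters precisely here, by ensuring that $E_{s^*}$ and $E^-$ sit within a slab of width $\le 4M$ around $E$, so they are available as local comparison sets on any bounded test region.
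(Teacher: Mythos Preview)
First, note that the paper does not actually prove Theorem~\ref{calibrallPL} here; it is quoted from \cite[Th.~4.19]{CGN_PL}. So there is no proof in this paper to compare against, and the question is simply whether your argument is valid.

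\medskip

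There is a genuine gap, and it can be located precisely. Your Steps~1 and~2 are fine (the computation showing $\int_\T z_E\in\partial\phi(p)$ is standard), but Step~3 does not close. The localized competitor you propose, namely $E':=(E_{s^*}\cap\Omega)\cup(E\setminus\Omega)$, always carries a gluing contribution of order $\HH\big((E\setminus E_{s^*})\cap\partial\Omega\big)$ on the interface $\partial\Omega$. The divergence computation on $(E\setminus E_{s^*})\cap\Omega$ produces a boundary flux of exactly the same order. Consequently the two estimates combine to give only
\[
0\ \le\ \int_{\partial^* E\cap\Omega}\bigl(F(x,\nu^E)-[z,\nu^E]\bigr)\,d\HH\ \le\ C\,\HH\big((E\setminus E_{s^*})\cap\partial\Omega\big),
\]
and the right-hand side is \emph{not} zero for any bounded $\Omega$ that actually sees the plateau. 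No choice of $\Omega$ forces pointwise equality; letting $\Omega\uparrow\R^d$ just reproduces the $O(R^{d-2})$ bound you already had on balls.

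\medskip

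There is a more conceptual way to see that the argument cannot work as written: nowhere in Steps~1--3 do you use the \emph{strong} Birkhoff property. Step~1 invokes only the proposition ``Birkhoff $\Rightarrow$ calibrated (by some $z_E$), plane-like, connected boundary'', and Steps~2--3 use only the plane-like bound. So if your Step~3 were correct it would prove that \emph{every} Birkhoff plane-like minimizer is calibrated by \emph{every} calibration in the direction $p$. But this is false: the heteroclinic solutions $H_+$ constructed immediately after Theorem~\ref{calibrallPL} are explicitly stated to satisfy the Birkhoff property but not the strong one, and the whole non-differentiability argument hinges on the fact that $z_-$ (a calibration in the direction $e_1$) does \emph{not} calibrate $H_+$. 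Thus your scheme proves too much, and the strong Birkhoff hypothesis must enter in a more essential way than merely controlling the width of the slab. In the reference \cite{CGN_PL} it is used to generate enough integer translates of $E$ (those with $p\cdot q=0$ give invariance, those with $p\cdot q\neq 0$ give strict monotonicity) to force the defect measure $(F(x,\nu^E)-[z,\nu^E])\,\HH\mesrest\partial^*E$ to vanish; the bare Birkhoff property is not sufficient for this.
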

As a consequence of Proposition \ref{order} and Theorem \ref{calibrallPL}, for every $p\in \R^d\setminus\{0\}$, the   plane-like minimizers of $\CA(p)$ form a lamination of $\R^d$ (possibly with gaps). In light of \eqref{equalz}, we see that
\begin{proposition}
 If there is no gap in the lamination by plane-like minimizers of $\CA(p)$ then $\phi$ is differentiable at the point $p$.
\end{proposition}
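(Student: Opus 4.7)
The plan is to use the duality from Proposition \ref{propdual}: it suffices to show that any two calibrations $z,z'$ in the direction $p$ satisfy $\int_\T z\,dx = \int_\T z'\,dx$, since this forces $\partial\phi(p)$ to be a singleton and hence differentiability of $\phi$ at $p$.

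First I would invoke Theorem \ref{calibrallPL}: both $z$ and $z'$ calibrate every $E \in \CA(p)$. The pointwise identity \eqref{equalz} then gives
\[
z(x) \;=\; \nabla_p F(x,\nu^E(x)) \;=\; z'(x) \qquad \HH\text{-a.e. on } \partial^* E,
\]
for every $E \in \CA(p)$. Consequently $z=z'$ on the union $U := \bigcup_{E \in \CA(p)} \partial^* E$.

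Next, I would translate the no-gap hypothesis into the statement that $U$ has full Lebesgue measure in $\R^d$. Fixing any calibration $z$, Proposition \ref{order} says that the elements of $\CA(p)$ are totally ordered by inclusion and their reduced boundaries pairwise intersect in an $\mathcal{H}^{d-3}$-null set. Using the strong Birkhoff property to restrict to a slab of bounded width transverse to $p/|p|$, one can parametrize this totally ordered family monotonically by a real parameter (say the signed distance along $p/|p|$ from a reference hyperplane); the absence of gaps means exactly that this parametrization covers the full real line. A Fubini/coarea argument along the direction $p/|p|$---each such line meets every leaf at a single point by the plane-like property---then yields that $\R^d \setminus U$ is Lebesgue-null. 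Since $z,z'$ are bounded and agree Lebesgue-a.e., $\int_\T z\,dx = \int_\T z'\,dx$, as required.

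The main obstacle is making the step ``no gaps $\Rightarrow$ $U$ has full Lebesgue measure'' rigorous without assuming a priori that $\CA(p)$ is a smooth foliation. The $C^{2,\alpha}$ regularity of each $\partial^* E$ from Proposition \ref{maxprinc}, combined with the ordering from Proposition \ref{order}, implies that after local parametrization the boundaries form a totally ordered family of Lipschitz graphs over the hyperplane $p^\perp$; one must then verify that such a family covering $\R^d$ set-theoretically must cover it up to a Lebesgue-negligible set. This is essentially a measure-theoretic fact (ruling out ``plateaus'' in the parametrizing function), but some care is needed because the parameter set of the lamination can be topologically complicated, in particular when $p$ is rational and the leaves are only periodic modulo a sublattice of $\Z^d$.
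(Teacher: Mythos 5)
Your argument is exactly the one the paper intends: its entire justification is the phrase ``In light of \eqref{equalz}, we see that\ldots'', i.e.\ Theorem \ref{calibrallPL} forces any two calibrations in the direction $p$ to agree, via \eqref{equalz}, on the (reduced) boundaries of all sets in $\CA(p)$, and the no-gap hypothesis makes these boundaries fill $\R^d$, so the average over $\T$, and hence $\partial\p(p)$, is uniquely determined. The measure-theoretic subtlety you flag at the end (passing from an uncountable union of $\HH$-a.e.\ identities on Lebesgue-null leaves to Lebesgue-a.e.\ equality of the $L^\infty$ fields) is genuine, but it is precisely what the paper defers to \cite{CGN_PL,CGN_reg}, and your sketch of how to resolve it via the ordering of the lamination is consistent with the treatment there.
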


We are thus just left to prove that if $p$ is not totally irrational (meaning that there exists $q\in\Z^d$ such that $p\cdot q=0$) and if 
there is a gap $G$ (whose boundary $\partial E^+\cup \partial E^-$ is made of two plane-like minimizers of $\CA(p)$) in the lamination then
 $\phi$ is not differentiable at the point $p$. To simplify the notations and the argument, let us consider
 the case $p=2$ and $(p,q)=(e_1,e_2)$, the canonical basis of $\R^2$. Let $E_n:=\{v_{e_1+\frac{1}{n}e_2}+(e_1+\frac{1}{n}e_2)\cdot x>0\}$ be plane-like
 minimizers in the direction $e_1+\frac{1}{n}e_2$ which intersects $G$, then up to translations (in the direction $e_2$), we can assume that there is a subsequence 
which converges to a set $H_+$ which also intersects the gap. It can be shown that $H_+$ is an heteroclinic solution meaning 
that it is included inside $G$, satisfies the Birkhoff property (but not the  strong one), and that $H_+ \pm ke_2\to E^{\pm}$ when $k\in \N$ 
goes to infinity (see \cite[Prop. 4.27]{CGN_PL}). Moreover $H_+$ is calibrated by $z_+:=\lim z_n$ where $z_n$ is any calibration in the direction $ e_1+\frac{1}{n}e_2$ (notice that $z_+$ is then a calibration in the direction $e_1$). 
Consider similarly $H_-$ (respectively $z_-$), an heteroclinic solution in the direction $-e_2$ ( respectively a calibration of $H_-$) then we aim at proving that 
\[\int_{G\cap Q} [z_+-z_-, e_2]>0\]
which would imply the non differentiability of $\phi$ at $e_1$ (in the direction $e_2$). 
  \begin{proposition}
For $t\in [0,1)$, let $S_t:=\{x\cdot e_2=t\}$ (and $S=S_0$) then almost every $s, t \in \R$, we have
\begin{equation}\label{segt}
\int_{S_t\cap G} \left[z_+-z_-, e_2\right]=\int_{S_s\cap G} \left[z_+-z_-, e_2\right].
\end{equation}
In particular,
\begin{equation}\label{egsurf}\int_{Q\cap G} [z_+-z_-, e_2] = \int_{S\cap G} \left[z_+-z_-,e_2\right].
\end{equation}
\end{proposition}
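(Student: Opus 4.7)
The plan is to apply Anzellotti's integration by parts to the divergence-free field $z_+ - z_-$ on the bounded region of $G$ lying between two horizontal levels, and then to derive \eqref{egsurf} from \eqref{segt} via Fubini.

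For \eqref{segt}, the key observation is that $z_+$ and $z_-$ both calibrate $E^+$ and $E^-$: indeed, $z_+$ and $z_-$ are calibrations in the direction $e_1$ and the plane-like minimizers $E^\pm$ forming $\partial G$ lie in $\CA(e_1)$ (they satisfy the strong Birkhoff property), so Theorem~\ref{calibrallPL} applies. Hence
\[
[z_+ - z_-, \nu^{E^\pm}]\ =\ F(x,\nu^{E^\pm}) - F(x,\nu^{E^\pm})\ =\ 0 \qquad \HH\text{-a.e.\ on }\partial^* E^\pm.
\]
Because $E^\pm$ are plane-like in direction $e_1$ with the same constant $M$, the gap $G = E^- \setminus E^+$ is confined to a vertical strip of bounded horizontal width, so the 2D region $R_{s,t} := G \cap \{s < x\cdot e_2 < t\}$ is bounded. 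Its reduced boundary splits as $(S_s \cap G) \cup (S_t \cap G) \cup (\partial^* E^+ \cap R_{s,t}) \cup (\partial^* E^- \cap R_{s,t})$ with outward unit normals $-e_2$, $e_2$, $\nu^{E^+}$, $-\nu^{E^-}$ respectively. Since $z_+, z_- \in \X$ we have $\Div(z_+ - z_-) = 0$, and Anzellotti's Green formula applied to $z_+ - z_-$ on $R_{s,t}$ yields
\[
0\ =\ \int_{S_t\cap G}[z_+-z_-, e_2] - \int_{S_s\cap G}[z_+-z_-, e_2],
\]
the contributions along $\partial^* E^\pm$ vanishing by the calibration identity above. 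This gives \eqref{segt} on the full-measure set of levels $s, t$ for which the Anzellotti slice traces are well defined.

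For \eqref{egsurf}, I first note that since $E^\pm$ satisfy the strong Birkhoff property in direction $e_1$ and $e_1\cdot e_2 = 0$, we have $E^\pm + e_2 = E^\pm$, so $G$ is $e_2$-periodic. Combining this with the horizontal boundedness of $G$ and the $\Z^2$-periodicity of the integrand $[z_+ - z_-, e_2]$, Fubini over one $e_2$-period gives
\[
\int_{Q \cap G}[z_+-z_-, e_2]\,dx\ =\ \int_0^1 \left(\int_{S_t \cap G}[z_+-z_-, e_2]\,dx_1\right) dt.
\]
By \eqref{segt} the inner integral is independent of $t$ and equal to $\int_{S \cap G}[z_+-z_-, e_2]\,dx_1$, so integration in $t$ over $[0,1)$ produces exactly that constant, proving \eqref{egsurf}.

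The main technical hurdle is the careful handling of the Anzellotti traces: since $z_\pm \in L^\infty$ only, the slice pairings $[z_+ - z_-, e_2]$ on $S_t$ are not classical traces, and the Green formula is only available for almost every pair $s, t$, which is precisely why the statement restricts to almost every $s, t \in \R$. A secondary point is the Fubini bookkeeping needed to equate $\int_{Q \cap G}$ with the integral over one $e_2$-period of $G$, which is legitimate thanks to the $e_2$-periodicity of $G$, the $e_1$-boundedness forced by the plane-like bounds on $E^\pm$, and the $\Z^2$-periodicity of $z_+ - z_-$.
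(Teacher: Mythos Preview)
Your argument is correct and matches the paper's: apply the divergence theorem to the divergence-free field $z_+-z_-$ on the slab of $G$ between heights $s$ and $t$, then integrate in $t$ to pass from \eqref{segt} to \eqref{egsurf}. You make explicit the reason the lateral contributions along $\partial G=\partial E^+\cup\partial E^-$ drop out---namely that both $z_+$ and $z_-$ calibrate $E^\pm$ via Theorem~\ref{calibrallPL}, so $[z_+-z_-,\nu^{E^\pm}]=0$---a point the paper's three-line proof leaves implicit.
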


\begin{proof}
Fix $s<t \in \R$, let $S^t_s:=\left\{ x \in Q \,:\, s< x \cdot e_2 < t\right\}$  then 
\begin{align*}
 0=\int_{S_s^t\cap G} \Div  (z_+-z_-) &	=\int_{S_t\cap G}  [z_+-z_-, e_2]-\int_{S_s\cap G} [z_+-z_-, e_2].
\end{align*}
\end{proof}

\begin{proposition}
Let $\nu$ be the inward normal to $H_q$, then
 \begin{equation}\label{egheter}\int_{S\cap G} \left[z_+-z_-, e_2\right]= 
 \int_{\partial^* H_+} [z_+-z_-, \nu].\end{equation}

\end{proposition}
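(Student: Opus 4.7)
The plan is to integrate the divergence-free field $w:=z_+-z_-$ over a bounded truncation of $G\setminus H_+$ and let the truncation exhaust $G$ in the $e_2$-direction. First note two structural facts: by the strong Birkhoff property applied to $q=e_2$ (which satisfies $e_1\cdot e_2=0$), the sets $E^{\pm}$, and hence the gap $G=E^+\setminus\overline{E^-}$, are $e_2$-periodic; and since $E^{\pm}$ are plane-like in the direction $e_1$, $G$ is confined to a strip $\{|x\cdot e_1|\le M\}$, so
$$R_T:=(G\setminus H_+)\cap \{-T<x\cdot e_2<T\}$$
is a bounded set of finite perimeter for every $T\in \N$.

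The reduced boundary of $R_T$ splits into four pieces: parts of $\partial^* H_+$, parts of $\partial^* G\subset \partial^* E^+\cup \partial^* E^-$, the top slice $R_T\cap \{x\cdot e_2=T\}$ and the bottom slice $R_T\cap \{x\cdot e_2=-T\}$. On $\partial^* E^{\pm}$, Theorem~\ref{calibrallPL} ensures that both $z_+$ and $z_-$ calibrate $E^{\pm}$, so $[z_+,\nu^{E^{\pm}}]=F(x,\nu^{E^{\pm}})=[z_-,\nu^{E^{\pm}}]$, hence $[w,\nu^{G,\mathrm{out}}]=0$ and the $\partial G$ contribution vanishes. On $\partial^* H_+\cap R_T$ the outer normal of $R_T$ agrees with the inner normal $\nu$ of $H_+$, and by Proposition~\ref{maxprinc} we have $\mathcal{H}^{d-3}(\partial H_+\cap \partial E^{\pm})=0$, so up to a surface-null set this piece agrees with $\partial^* H_+\cap \{-T<x\cdot e_2<T\}$.

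For the top slice, the asymptotic $H_+-k e_2\to E^-$ combined with $G\cap E^-=\emptyset$ yields that $(G\setminus H_+)\cap\{x\cdot e_2=T\}$ differs from $G\cap \{x\cdot e_2=T\}$ by a set of vanishing measure as $T\to \infty$; by $e_2$-periodicity of both $w$ and $G$, its integral against $[w,e_2]$ converges to $\int_{S\cap G}[w,e_2]$. For the bottom slice, $H_++ke_2\to E^+$ together with $G\subset E^+$ gives $(G\setminus H_+)\cap \{x\cdot e_2=-T\}\to \emptyset$, so its contribution vanishes in the limit. Applying $\int_{R_T}\Div w=0$ via the Anzellotti divergence formula and letting $T\to \infty$ along integers therefore yields~\eqref{egheter}.

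The main obstacle I expect is justifying the limit on $\partial^* H_+$, which requires that the pairing $[w,\nu]$ be absolutely integrable on the infinite surface $\partial^* H_+$. This is not automatic, but follows a posteriori from the divergence identity itself together with the sign information $[w,\nu]=F(x,\nu)-[z_-,\nu]\ge 0$ (using that $z_+$ calibrates $H_+$ and that $F^\circ(x,z_-)\le 1$). Qualitatively, the integrand decays to $0$ at the two ends of $H_+$ because $\partial H_+$ asymptotically coincides with $\partial E^{\pm}$ on which $z_-$ does calibrate, which is the mechanism that makes the two horizontal slice limits finite in the first place.
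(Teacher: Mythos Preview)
Your approach is correct and is essentially the paper's argument: integrate the divergence-free field $z_+-z_-$ over a region of $G$ bounded by $\partial^* H_+$, pieces of $\partial G$ (where $[z_+-z_-,\nu]=0$ since both $z_\pm$ calibrate $E^\pm$), and horizontal slices $\{x\cdot e_2=\text{const}\}$. The only difference is bookkeeping: the paper splits by the single slice $S$ into the two regions $G^+:=G\cap\{x\cdot e_2>0\}\cap H_+^c$ and $G^-:=G\cap\{x\cdot e_2<0\}\cap H_+$, each of which pinches off at its unbounded end by the heteroclinic asymptotics (so no truncation limit is taken), applies the divergence theorem on each, and sums the two identities using $S^+\cup S^-=S\cap G$ and $\Sigma^+\cup\Sigma^-=\partial^* H_+$; you instead keep the single region $G\setminus H_+$, truncate at $\pm T$, and pass to the limit.
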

\begin{proof}
We first introduce some additional notation (see Figure \ref{sigmaplus}): let 
\[\Sigma^+:= \partial^* H_+ \cap \{ x \cdot e_2 >0\} \qquad G^+:= G\cap \{ x \cdot e_2 >0\} \cap H_+^c\]
and 
\[S^+:= S\cap G \cap H_+^c.\]

\begin{figure}[ht]
\centering{\input{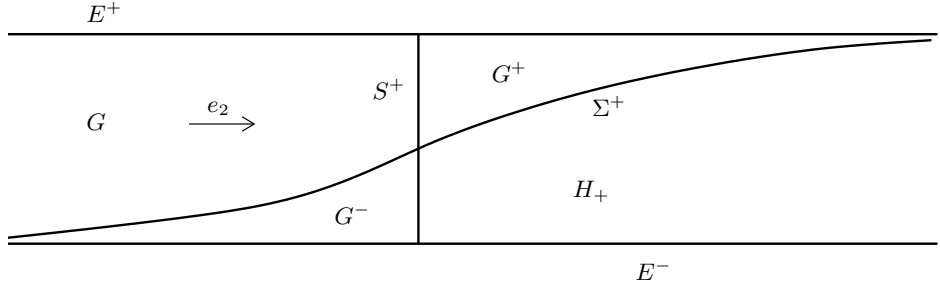}}
\caption{Heteroclinic solution in the direction $e_2$.}
\label{sigmaplus}
\end{figure}

Then, 
\[
 0=\int_{G^+} \Div  (z_+-z_-)=\int_{S^+} \left[z_+-z_-, e_2\right] -\int_{\Sigma^+} [z_+-z_-, \nu]
\]

Similarly we define $\Sigma^-$ and $S^-$ and get 
\[\int_{\Sigma^-} [z_+-z_-, \nu]= \int_{S^-} \left[z_+-z_-, e_2\right].\]
Summing these two equalities we find \eqref{egheter}.
\end{proof}
We can now conclude. Indeed, since $z_+$ calibrates $H_+$ and since $z_-\in \X$, on $\partial^* H_+$, there holds $[z_+-z_-,e_2]=F(x,\nu)-[z_-,\nu]\ge 0$ and thus if 
\[\int_{\partial^* H_+} [z_+-z_-, \nu]=0\]
then $F(x,\nu)=[z_-,\nu]$ on $\partial H_+$ and thus $z_-$ calibrates also $H_-$ which would lead to a contradiction since it implies that $H_+$ and $H_-$ cannot cross. Hence, 
\[\int_{G\cap Q} [z_+-z_-, e_2]=\int_{\partial^* H_+} [z_+-z_-, \nu]>0.\]
In conclusion we have (see \cite{CGN_PL} for a complete proof)

\begin{proposition}
 If there is a gap in the lamination  by plane-like minimizers of $\CA(p)$ and if
 $(q_1, \dots , q_k)\in \Z^d$ is a maximal
family of independent vectors such that $q_i\cdot p=0$, then
$\partial \phi(p)$ is a convex set of dimension $k$, and
$\phi$ is differentiable in the directions which are orthogonal
to $\{q_1, \dots,q_k\}$. In particular if $p$ is not totally irrational then $\p$ is not differentiable at $p$.
\end{proposition}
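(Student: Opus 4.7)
The plan is to prove the proposition by establishing two inclusions for the difference set $\partial\phi(p) - \partial\phi(p)$: a lower bound $\dim(\partial\phi(p) - \partial\phi(p)) \ge k$ via $k$ explicit non-trivial differences of calibration averages, and the reverse inclusion $\partial\phi(p) - \partial\phi(p) \subset \mathrm{span}(q_1,\ldots,q_k)$ via an adaptation of Proposition~\ref{totir}. Together these give $\dim\partial\phi(p) = k$ and differentiability of $\phi$ at $p$ in every direction orthogonal to $\{q_1,\ldots,q_k\}$.

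For the lower bound, fix $q \in \Z^d$ with $q \cdot p = 0$ and repeat the heteroclinic construction of the excerpt with $q$ in place of $e_2$. Approximate $p$ by totally irrational vectors $p_n^\pm = p \pm \eta_n q$, $\eta_n \downarrow 0$, let $v_n^\pm$ minimize~\eqref{defphi} at $p_n^\pm$, and let $z_n^\pm$ be associated calibrations. By Theorem~\ref{thmcdll} the plane-like sets $\{v_n^\pm + p_n^\pm \cdot x > 0\}$ have uniform plane-like width $M$, so they must enter the gap $G$ for $n$ large; after translating by integer multiples of $q$, a subsequence converges to heteroclinic sets $H_\pm^q \subset G$ satisfying the Birkhoff property, with $H_\pm^q + m q$ tending to $E^\pm$ and $E^\mp$ as $m \to \pm\infty$, while $z_n^\pm$ converges weakly-$*$ in $L^\infty$ to $z_\pm^q \in \X$, calibrating $H_\pm^q$ and, by continuity of $\phi$, giving a calibration in the direction $p$. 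Running the two propositions of the excerpt with $e_2$ replaced by $q$ (defining slices $S_t^q := \{x \cdot q = t\}$) then yields
\[
\Bigl(\int_Q (z_+^q - z_-^q)\Bigr) \cdot q \;=\; \int_{\partial^* H_+^q} [z_+^q - z_-^q,\, \nu^{H_+^q}]\, d\HH \;>\; 0,
\]
the last strict inequality arising because otherwise $z_-^q$ would calibrate $H_+^q$ and Proposition~\ref{order} would force an ordering between $H_+^q$ and $H_-^q$ incompatible with their opposite heteroclinic limits. Applying this construction with $q = q_i$ gives $k$ candidate difference vectors, and by a Gram-determinant argument (using the reverse inclusion below to place all differences in $\mathrm{span}(q_1,\ldots,q_k)$, and taking additional integer combinations of the $q_i$'s if necessary) one extracts $k$ linearly independent elements of $\partial\phi(p) - \partial\phi(p)$.

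For the reverse inclusion, I adapt Proposition~\ref{totir}. Let $z, z' \in \X$ be any two calibrations in the direction $p$, let $w \in \mathrm{span}(q_1,\ldots,q_k)^\perp$, and let $v_p$ minimize~\eqref{defphi}. On each level set $C_s := \{v_p + p \cdot x = s\}$ with positive Lebesgue measure (of which there are only countably many), both $z$ and $z'$ calibrate the plane-like minimizer, so $[z, \nu^{C_s}] = [z', \nu^{C_s}]$ on $\partial^* C_s$. Since $v_p$ is $\Z^d$-periodic and $p \cdot q \ne 0$ for $q \in \Z^d \setminus \Lambda$ (where $\Lambda := \Z^d \cap \mathrm{span}_\R(q_1,\ldots,q_k)$), the set $C_s$ is $\Lambda$-periodic and its projection to $\R^d / \Lambda$ has Lebesgue measure at most $1$. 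Because $w \perp \Lambda$, the linear function $x \mapsto x \cdot w$ descends to $\R^d/\Lambda$, and integration by parts of $\Div((x \cdot w)(z - z'))$ on $C_s/\Lambda$ (using $\Div(z - z') = 0$ and the vanishing boundary trace of $[z-z',\nu^{C_s}]$) gives $\int_{C_s}(z - z') \cdot w = 0$; summing over $s$ yields $\int_Q(z - z') \cdot w\, dx = 0$. Hence $\partial\phi(p) - \partial\phi(p) \subset \mathrm{span}(q_1,\ldots,q_k)$, proving both the upper bound on $\dim\partial\phi(p)$ and the differentiability of $\phi$ at $p$ in every direction orthogonal to the $q_i$'s.

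The main obstacle is the lower-bound construction, specifically the extraction of the heteroclinic sets $H_\pm^q$ inside $G$ together with calibrations $z_\pm^q \in \X$ that calibrate them and are calibrations in the \emph{limit} direction $p$. This requires carefully combining the uniform $C^{2,\alpha}$ boundary regularity of Proposition~\ref{maxprinc} (to pass to the limit of the boundaries in Hausdorff distance), weak-$*$ compactness of the $z_n^\pm$ in $L^\infty$ together with stability of the pointwise constraint $F^\circ(x,\cdot) \le 1$, continuity of $\phi$ via Proposition~\ref{propdual} to certify the limits as calibrations at $p$, and a careful choice of integer translations to ensure that the limiting heteroclinics actually lie in $G$ rather than degenerating to one of $E^\pm$. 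Once this compactness is in place, the divergence-theorem identities and the orthogonality computation are direct transcriptions of the arguments already given in the excerpt and in Proposition~\ref{totir}.
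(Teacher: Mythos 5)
Your overall architecture --- the heteroclinic construction plus divergence theorem to produce, for each $q\in\Z^d$ with $q\cdot p=0$, two subgradients whose difference has positive component along $q$, and a quotient-by-$\Lambda$ adaptation of Proposition \ref{totir} (with $\Lambda:=\Z^d\cap\mathrm{span}(q_1,\dots,q_k)$) to show that all differences of subgradients lie in $U:=\mathrm{span}(q_1,\dots,q_k)$ --- is the same as the one followed here and in \cite{CGN_PL}, and both halves are workable at the level of rigor of this text. The genuine gap is in the step you dismiss as a routine ``Gram-determinant argument'': deducing $\dim\partial\phi(p)=k$ from these two facts.

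Let $W$ denote the linear span of $\partial\phi(p)-\partial\phi(p)$. Your two halves give (i) $W\subset U$, and (ii) for every $q\in\Lambda\setminus\{0\}$ there is $\delta_q\in\partial\phi(p)-\partial\phi(p)$ with $\delta_q\cdot q>0$, i.e. $\Lambda\cap(W^\perp\cap U)=\{0\}$. This does \emph{not} imply $W=U$: the subspace $T:=W^\perp\cap U$ could be a nontrivial \emph{irrational} subspace of $U$ containing no lattice point. For instance with $k=2$ and $q_1,q_2$ a basis of $\Lambda$, pick $w\in U$ with $q_1\cdot w=1$ and $q_2\cdot w=\sqrt2$; nothing in (i)--(ii) prevents every $\delta_q$ from lying on the line $\R w$, since $(aq_1+bq_2)\cdot w=a+b\sqrt2\neq0$ for all integers $(a,b)\neq(0,0)$, so each $\delta_q$ can still have strictly positive component along its own $q$ while $\dim W=1$. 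No determinant built from finitely or countably many integer combinations of the $q_i$ can exclude this scenario. To close the argument one must either show that $W$ is a rational subspace, or prove non-differentiability of $\phi$ at $p$ in \emph{every} direction having a nonzero component in $U$ (not only the integer ones), which is what \cite{CGN_PL}, following \cite{auerbangert}, actually establishes and which requires more than running the heteroclinic construction once per $q_i$. A secondary (and more benign) gloss, shared with the text above, is your identification of $\bigl(\int_Q(z_+^q-z_-^q)\bigr)\cdot q$ with the integral over the single gap $G$: one still has to argue that the contributions of the laminated region and of the other gaps do not cancel the positive term.
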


\section{Existence and asymptotic behavior of isoperimetric sets} 

In this section we extend some results of \cite{GNvol} on the existence of compact minimizers of the isoperimetric problem \eqref{Probisoper}. In addition to \eqref{boundF}, we will make the hypothesis that $F$ is uniformly Lipschitz continuous in $x$, i.e. there exists $C>0$ such that
\[|F(x,p)-F(y,p)|\le C|x-y| \qquad \qquad \forall (x,y,p)\in \R^d\times \R^d\times \S^{d-1}.\]
 Our strategy will differ from the one of \cite[Th. 2.6]{GNvol}. It will instead closely follow \cite[Th. 4.9]{GNvol}.
 The idea is to use the Direct Method of the calculus of variations together with a kind of concentration compactness argument to deal with the 
invariance  by translations of the problem. Notice that a similar strategy has been used to prove existence of minimal clusters (see \cite[Th. 29.1]{maggi}).
We first recall Almgren's Lemma (see \cite[Lem. II.6.18]{maggi}, \cite{morgan}).
\begin{lemma}\label{almgren}
If $E$ is a set of finite perimeter and $A$ is an open set of $\R^d$ such that $\HH(\partial^*E\cap A)>0$ then there exists $\sigma_0>0$ and $C>0$ such that for every $\sigma\in (-\sigma_0,\sigma_0)$ there exists a set $F$ such that 
\begin{itemize}
 \item $F\Delta E\Subset A$,
\item $|F|=|E|+\sigma$,
\item $|\E(F,A)-\E(E,A)|\le C |\sigma|$.
\end{itemize}
\end{lemma}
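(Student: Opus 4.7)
Almgren's lemma says that we can adjust the volume of $E$ by a small amount $\sigma$ while only paying a cost linear in $|\sigma|$ for the anisotropic perimeter, with the modification localized inside $A$. The standard strategy is to implement this adjustment by a smooth compactly supported flow near a point of $\partial^* E$ where the measure-theoretic normal exists, and then apply the implicit function theorem.

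\textbf{Step 1: localization.} Since $\HH(\partial^*E\cap A)>0$, by standard properties of the reduced boundary we can pick $x_0\in\partial^*E\cap A$ at which the measure-theoretic inner normal $\nu^E(x_0)$ exists, together with a radius $r>0$ such that $\overline{B_r(x_0)}\subset A$ and $\HH(\partial^*E\cap B_r(x_0))>0$. Fix a nonnegative cutoff $\eta\in C^\infty_c(B_r(x_0))$ with $\eta\equiv 1$ on $B_{r/2}(x_0)$ and set $\xi(x):=\eta(x)\,\nu^E(x_0)\in C^\infty_c(B_r(x_0);\R^d)$. By the blow-up characterization of $\nu^E(x_0)$, we can shrink $r$ if necessary so that
\[
\alpha:=\int_{\partial^*E}\xi\cdot\nu^E\,d\HH\ >\ 0.
\]

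\textbf{Step 2: construction via a flow.} Let $\Phi_t$ denote the flow of $\xi$, which is defined for all $t\in\R$ and satisfies $\Phi_t=\mathrm{id}$ outside $B_r(x_0)$. Set $F_t:=\Phi_t(E)$. Then $F_t\,\Delta\,E\Subset B_r(x_0)\Subset A$, so the first bullet holds for every $t$. The volume function $V(t):=|F_t|=\int_E \det D\Phi_t(x)\,dx$ is smooth, and
\[
V'(0)=\int_E \mathrm{div}\,\xi\,dx=\int_{\partial^*E}\xi\cdot\nu^E\,d\HH=\alpha>0.
\]
By the inverse function theorem, for some $\sigma_0>0$ there exists a smooth map $\sigma\mapsto t(\sigma)$ defined on $(-\sigma_0,\sigma_0)$ with $t(0)=0$, $|F_{t(\sigma)}|=|E|+\sigma$, and $|t(\sigma)|\le C_1|\sigma|$.

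\textbf{Step 3: perimeter estimate.} Using the area formula applied to the $C^\infty$-diffeomorphism $\Phi_t$ together with $\partial^*F_t\cap A=\Phi_t(\partial^*E\cap A)$, we can write
\[
\E(F_t,A)=\int_{\partial^*E\cap A}F\bigl(\Phi_t(x),\,\nu_t(x)\bigr)\,J^\tau_t(x)\,d\HH(x),
\]
where $\nu_t$ is the pushforward of $\nu^E$ under $\Phi_t$ (suitably normalized) and $J^\tau_t$ is the tangential Jacobian. Both $\nu_t$ and $J^\tau_t$ are smooth in $t$ and equal to $\nu^E$ and $1$ respectively at $t=0$; together with the Lipschitz-in-$x$ and one-homogeneous continuity of $F$, this yields the uniform bound
\[
|\E(F_t,A)-\E(E,A)|\le C_2|t|\HH(\partial^*E\cap B_r(x_0))
\]
for $|t|$ sufficiently small. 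Taking $F:=F_{t(\sigma)}$ and combining with Step 2 gives $|\E(F,A)-\E(E,A)|\le C|\sigma|$, with $C=C_1C_2\HH(\partial^*E\cap B_r(x_0))$, concluding the proof.

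\textbf{Main obstacle.} The only delicate point is Step 1, where one must exploit the existence of an approximate tangent plane at an $\HH$-a.e.\ point of $\partial^*E$ to guarantee that the chosen $\xi$ produces a genuinely nonzero volume derivative; all subsequent steps are routine once a diffeomorphic deformation with $V'(0)\ne 0$ and the mild regularity of $F$ in $x$ are in hand.
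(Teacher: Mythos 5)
Your argument is correct and is essentially the proof given in the references the paper cites for this lemma (\cite{maggi}, \cite{morgan}); the paper itself states the lemma without proof. The flow-of-a-vector-field construction, the inverse function theorem applied to $t\mapsto|\Phi_t(E)|$, and the first-variation estimate for the anisotropic energy via the tangential Jacobian are exactly the standard route, and your adaptation to $\E$ correctly uses the Lipschitz continuity of $F$ in $x$ (assumed in Section 4) together with the bound $|F(x,p)-F(x,q)|\le c_0^{-1}|p-q|$, which follows from convexity, one-homogeneity and \eqref{boundF}. The only step worth streamlining is Step 1: rather than the blow-up argument at a chosen $x_0$, one can simply note that if $\int_{\partial^*E}\xi\cdot\nu^E\,d\HH=0$ for every $\xi\in C^\infty_c(A;\R^d)$ then $D\chi_E$ vanishes on $A$, contradicting $\HH(\partial^*E\cap A)>0$; your version works too, provided you justify (as you indicate) that the annulus $B_r\setminus B_{r/2}$ contributes only $o(r^{d-1})$ of negatively-oriented normal, which follows from De Giorgi's density and blow-up theorems.
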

 
We now prove that any minimizer (if it exists) has to be compact.
\begin{proposition}\label{bounddiam}
 For every $v>0$, every minimizer $E$ of \eqref{Probisoper} has bounded diameter. 
\end{proposition}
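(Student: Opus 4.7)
The plan is to show that $m(R):=|E\setminus B_R|$ vanishes for all $R$ sufficiently large, forcing $E\subset B_R$ and hence bounded diameter. Since the isoperimetric problem is translation-invariant, I may first translate $E$ so that $\HH(\partial^*E\cap B_{R_0})>0$ for some fixed $R_0>0$. I then fix an open ball $A\Subset B_{R_0}$ with $\HH(\partial^*E\cap A)>0$, and let $\sigma_0, C$ be the constants produced by Lemma \ref{almgren} for this $A$. A key preliminary observation is that, for every $R\ge R_0$, one has $A\Subset B_R$ and $\partial^*(E\cap B_R)\cap A=\partial^*E\cap A$; hence the same $\sigma_0, C$ apply uniformly to every truncation $E\cap B_R$.

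For $R\ge R_0$ with $m(R)<\sigma_0$, I apply Lemma \ref{almgren} to $E\cap B_R$ with parameter $\sigma=m(R)$ to obtain a competitor $E_R$ satisfying $|E_R|=|E|$, $E_R\,\Delta\,(E\cap B_R)\Subset A$, and $\E(E_R)\le \E(E\cap B_R)+Cm(R)$. Using the standard decomposition
\[
\E(E\cap B_R)=\E(E,B_R)+\int_{E^{(1)}\cap\partial B_R} F(x,\nu^{B_R})\,d\HH
\]
together with the upper bound $F(x,\cdot)\le c_0^{-1}|\cdot|$, the minimality of $E$ yields, for a.e.\ $R\ge R_0$,
\[
\E(E,\R^d\setminus\overline{B_R})\ \le\ c_0^{-1}\HH(E\cap\partial B_R)+Cm(R).
\]

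To close the argument, I combine this with the classical Euclidean isoperimetric inequality applied to $E\setminus\overline{B_R}$, whose perimeter is controlled by $\HH(\partial^*E\cap(\R^d\setminus\overline{B_R}))+\HH(E\cap\partial B_R)\le c_0^{-1}\E(E,\R^d\setminus\overline{B_R})+\HH(E\cap\partial B_R)$. Since $m(R)\to 0$, the linear term $Cm(R)$ is absorbed into the sublinear quantity $m(R)^{(d-1)/d}$ for $R$ large, yielding $c'\,m(R)^{(d-1)/d}\le C'\,\HH(E\cap\partial B_R)$. The coarea formula gives $\HH(E\cap\partial B_R)=-m'(R)$ for a.e.\ $R$, so $m$ satisfies the ODE-type inequality $m'(R)\le -c''\,m(R)^{(d-1)/d}$. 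Setting $f:=m^{1/d}$ turns this into $f'\le -c''/d$, which forces $f$, and hence $m$, to vanish past some finite radius $R^*$, so $E\subset B_{R^*}$.

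The main obstacle, and what dictates the setup, is ensuring that Almgren's constants can be chosen uniformly in $R$: this is precisely why one fixes the ball $A$ strictly inside $B_{R_0}$ once and for all, so that every volume-adjustment happens in a region untouched by the truncation at $\partial B_R$. The remaining steps — selecting $R$ outside a Lebesgue-null exceptional set so that $\HH(\partial^*E\cap\partial B_R)=0$ and the trace of $\1_E$ on $\partial B_R$ is well-defined — are standard manipulations with sets of finite perimeter, and the final ODE step is routine.
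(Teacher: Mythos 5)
Your proof is correct and follows essentially the same route as the paper's: Almgren's Lemma to restore the volume lost by truncation at $\partial B_R$, the minimality comparison to bound $\E(E,\R^d\setminus\overline{B_R})$ by $c_0^{-1}\HH(E\cap\partial B_R)+Cm(R)$, the Euclidean isoperimetric inequality, and the differential inequality $(m^{1/d})'\le -c$. The only (harmless) differences are that you truncate first and then adjust the volume inside a fixed ball $A$ (whereas the paper adjusts the volume of $E$ first and then intersects with $B_r$), and that you conclude directly that $m$ vanishes in finite radius rather than arguing by contradiction.
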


\begin{proof}
The proof follows the classical method to prove density estimates for minimizers of isoperimetric problems 
(see for instance \cite{giusti}). Fix $v>0$ and let $E$ be a minimizer of \eqref{Probisoper}. Let then $f(r):=|E\backslash B_r|$. Let us assume that the diameter of $E$ is not finite, so that $f(r)>0$ for every $r>0$. 
Without loss of generality, we can also assume that $\HH(\partial^* E\cap B_1)>0$.
Let $\sigma_0$ and $C$ be given by Lemma \ref{almgren} with $A=B_1$, and fix $R>1$ such that $f(R)\le \sigma_0$ then for every $r>R$ there exists $F$ such that 
\begin{itemize}
\item $F\Delta E\Subset B_1$,
\item $|F|=|E|+f(r)$,
\item $|\E(E,B_r)-\E(F,B_r)|\le C f(r)$.
\end{itemize}
 Letting $G:= F\cap B_r$ we have $|G|=|E|$ thus, by minimality of $E$, we find
\[\E(E)\le \E(G)\le \E(F,\overline B_r)+c_0^{-1}\HH(\partial B_r\cap F)\le \E(E,\overline B_r)+C f(r)+ c_0^{-1}\HH(\partial B_r\cap E)\]
and thus
\[
c_0 \HH(\partial^*E\backslash \overline B_r)\le 
\E(E, \overline B_r^c) \le C f(r) + c_0^{-1}\HH(\partial B_r\cap E).
\]
Recalling that $f'(r)=- \HH(\partial B_r\cap E)$ and $\HH(\partial^*E\cap \partial B_r)=0$ for a.e. $r>0$,
we get
\[
c_0 \HH(\partial^*(E\backslash B_r)) = 
c_0 \HH(\partial^*E\backslash \overline B_r) - c_0 f'(r)\le 
C f(r)- \left(c_0+c_0^{-1}\right)f'(r)
\]
for a.e. $r>0$.
By the isoperimetric inequality \cite{giusti} it then follows
\[c_1 f(r)^{\frac{d-1}{d}}\le c_2 f(r)-f'(r)\]
for some constants $c_1,c_2>0$.
If now $R_1>R$ is such that $f(R_1)^\frac 1 d\le \frac{c_1}{2c_2}$, we get for $r\ge R_1$,
\[\frac{c_1}{2} f(r)^{\frac{d-1}{d}}\le -f'(r)\]
and thus $\left(f^{1/d}\right)'\le -\frac{c_1}{2}$, which leads to a contradiction.
\end{proof}

\begin{remark}\rm
 Adapting the proof of \cite{cinti} to the anisotropic case, it should be possible to prove the boundedness of minimizers under the weaker assumption that $F(\cdot,p)$ is continuous (using the so-called $\eps-\eps^\beta$ property).
\end{remark}

We can now prove the existence of compact minimizers for every volume $v>0$.
\begin{theorem}\label{existvol}
 For every $v>0$ there exists a compact minimizer of \eqref{Probisoper}.
\end{theorem}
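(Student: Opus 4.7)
The plan is to apply the direct method of the calculus of variations to a minimizing sequence for \eqref{Probisoper}, using the $\Z^d$-periodicity of $F$ and a concentration–compactness type argument to handle the translation invariance, and then to invoke Proposition \ref{bounddiam} to upgrade the limit to a compact set. The strategy mimics Almgren's existence proof for minimal clusters (Theorem~29.1 in Maggi's book), with Lemma \ref{almgren} and Proposition \ref{bounddiam} playing analogous roles.

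First, I fix a minimizing sequence $\{E_n\}$ with $|E_n|=v$ and $\E(E_n)\to\Phi(v):=\inf\{\E(E):|E|=v\}$. The bound \eqref{boundF} together with a comparison against a large cube shows $\Phi(v)<\infty$ and $P(E_n)\le C$ uniformly. The key observation is that translation by any $k\in\Z^d$ preserves both $|E_n|$ and $\E(E_n)$ by periodicity of $F$. Looking at the cube masses $m_n(k):=|E_n\cap(Q+k)|$ (bounded by $1$ and summing to $v$), the cubewise isoperimetric inequality combined with the perimeter bound forces $\sup_k m_n(k)\ge\delta>0$ uniformly in $n$: otherwise $E_n$ would spread so thinly that $P(E_n)\to\infty$. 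I then translate $E_n$ by an integer vector so that the largest $m_n(k)$ sits in $Q$, and via BV compactness on balls of increasing radius plus a diagonal extraction obtain a subsequence converging locally in $L^1$ to a set $E^1$ of finite perimeter with volume $v_1\in[\delta,v]$. If $v_1=v$, lower semicontinuity of $\E$ gives that $E^1$ is a minimizer, and Proposition \ref{bounddiam} makes it compact. Otherwise, a fraction $v-v_1>0$ of the mass has escaped; I remove a large ball around $E^1$ from $E_n$, translate the residue by another integer vector, and extract a second limit $E^2$ of volume $v_2>0$. Iterating produces an at most countable family $\{E^i\}$, pairwise separated at infinity along the subsequence, with $\sum_i v_i\le v$ and
\[
\sum_i\E(E^i)\;\le\;\liminf_{n\to\infty}\E(E_n)\;=\;\Phi(v).
\]
A standard cut-and-paste argument (replacing any $E^i$ by a strictly better competitor of the same volume would strictly decrease $\liminf_n\E(E_n)$) shows each $E^i$ is itself a minimizer for its volume $v_i$, hence compact by Proposition \ref{bounddiam}.

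The main obstacle is to rule out splitting, that is, to show that only one $v_i$ is positive and equals $v$. I would establish the strict subadditivity $\Phi(v_1+v_2)<\Phi(v_1)+\Phi(v_2)$ for all $v_1,v_2>0$. Given two disjoint compact minimizers $E^1,E^2$ of volumes $v_1,v_2$ placed far apart by an integer translation (so that $\E(E^1\cup E^2)=\Phi(v_1)+\Phi(v_2)$), Lemma \ref{almgren} applied simultaneously in two disjoint balls meeting $\partial^*E^1$ and $\partial^*E^2$ transfers a small volume $\sigma>0$ from one to the other at a first-order energy change of the form $(\lambda_2-\lambda_1)\sigma$, where $\lambda_i$ is the anisotropic Lagrange multiplier of $E^i$. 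If $\lambda_1\neq\lambda_2$ this already yields a strictly better competitor of volume $v_1+v_2$; the delicate case $\lambda_1=\lambda_2$ is handled by an explicit comparison with a rescaled Wulff shape of $\phi$ of volume $v_1+v_2$, whose asymptotic energy $\sim c_\phi(v_1+v_2)^{(d-1)/d}$ (accessible through the $\Gamma$-convergence of Theorem \ref{thconv}) is strictly less than $c_\phi v_1^{(d-1)/d}+c_\phi v_2^{(d-1)/d}$ by strict concavity of $v\mapsto v^{(d-1)/d}$. Once strict subadditivity is in hand, combining it with $\sum_i\Phi(v_i)\le\Phi(v)$ and $\sum_i v_i\le v$ collapses the decomposition to a single piece of volume $v$, which is the desired compact minimizer.
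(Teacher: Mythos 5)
Your concentration--compactness decomposition is essentially the paper's: a minimizing sequence is split, after integer translations, into countably many local limits $E^i$, each a minimizer for its own volume $v_i$ (hence bounded by Proposition \ref{bounddiam}), with $\sum_i \E(E^i)\le \inf_{|E|=v}\E$. Up to that point the argument is sound (the paper organizes the splitting via a cube partition and an equivalence relation on the cube indices, and uses the decay estimate \eqref{ndelta2} to prove that \emph{no mass is lost}, i.e.\ $\sum_i v_i=v$ exactly --- a point you should also establish, since your own conclusion needs it and you only claim $\sum_i v_i\le v$).

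The genuine gap is in your final step. You propose to rule out splitting by proving strict subadditivity of the isoperimetric profile $\Phi$, but neither of your two mechanisms works as stated. Lemma \ref{almgren} only gives the Lipschitz bound $|\E(F,A)-\E(E,A)|\le C|\sigma|$; it does not produce a first-order expansion with a well-defined Lagrange multiplier $\lambda_i$, and extracting one would require the regularity theory (ellipticity and smoothness of $F$) that is not assumed in this section, where $F$ is merely continuous in $p$ and Lipschitz in $x$. Worse, in the degenerate case $\lambda_1=\lambda_2$ your comparison with a Wulff shape via Theorem \ref{thconv} only controls the \emph{asymptotics} of $\Phi(v)$ as $v\to\infty$ (equivalently the rescaled problems as $\eps\to 0$); at a fixed volume $v_1+v_2$ the energy of a Wulff-shaped competitor under $\E$ is not $c_\phi(v_1+v_2)^{(d-1)/d}$, and strict subadditivity of $\Phi$ in a periodic medium is genuinely unclear --- it is not established in the paper either.

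The point you are missing is that strict subadditivity is not needed. Since each $E^i$ is a minimizer for its volume, Proposition \ref{bounddiam} makes each $E^i$ bounded, so you may translate the pieces by integer vectors (preserving $\E$ by periodicity) so that they are pairwise disjoint, and set $F:=\bigcup_i (E^i+\tau_i)$. Then $|F|=\sum_i v_i=v$ and $\E(F)=\sum_i\E(E^i)\le\inf_{|E|=v}\E(E)$, so $F$ is itself a minimizer of \eqref{Probisoper}; applying Proposition \ref{bounddiam} once more to $F$ shows that $F$ has bounded diameter, which is the desired compact minimizer (and, incidentally, forces all but finitely many pieces to be empty). This is exactly how the paper concludes, with no need to show that the minimizer consists of a single piece.
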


\begin{proof}
 To simplify  the notations, let us assume that $v=1$. Let $E_k$ be a minimizing sequence meaning that $|E_k|=1$ and $\E(E_k)\to \inf_{|E|=1} \E(E)$. For every $k\in \N$, let  
$\{Q_{i,k}\}_{i\in\mathbb N}$ be a partition of  $\R^d$ into disjoint cubes of equal volume larger than $2$, such that 
the sets $E_{k}\cap Q_{i,k}$ are of decreasing measure, and 
let  $x_{i,k}=|E_{k}\cap Q_{i,k}|$. By the isoperimetric inequality, 
there exist $0<c<C$ such that 
\begin{align*}
c\sum_i x_{i,k}^\frac{d-1}{d}&=c\sum_i \min\left(|E_{k}\cap Q_{i,k}|,|Q_{i,k} \backslash E_{k}|\right)^\frac{d-1}{d}\\
			   & \le \sum_i P(E_{k},Q_{i,k})\\			   
   			   &\le \sum_i c_0 \E(E_k,Q_{i,k})\\
			   &\le c_0 \E(E_k)\le C
\end{align*}
hence
\[
\sum_{i=1}^{\infty} x_{i,k}=1 \qquad 
{\rm and}\qquad 
\sum_{i=1}^{\infty} x_{i,k}^\frac{d-1}{d}\le \frac{C}{c}.
\]
Since $x_{i,k}$ is nonincreasing with respect to $i$, it follows that (cf \cite[Lem. 4.2]{GNvol}) for any $N$
 \begin{equation}\label{ndelta2}
 \sum_{i=N}^{\infty} x_{i,k} \ \le\ \frac{C}{c} \frac{1}{N^{1/d}}\,.
 \end{equation}

Up to extracting a subsequence, we can suppose that  
$x_{i,k}\to \alpha_i\in [0,1]$ as $k\to +\infty$ for every $i\in\mathbb N$, so that by \eqref{ndelta2} we have
\begin{equation}\label{eqstella}
\sum_i \alpha_i=1.
\end{equation}
Let $z_{i,k} \in Q_{i,k}$. Up to extracting a further subsequence, we can suppose that $d(z_{i,k},z_{j,k}) \to c_{ij} \in [0,+\infty]$, and

\[
\left(E_{k}-z_{i,k}\right) \to E_i \quad \textrm{ in the } L^1_{\rm loc}\textrm{-convergence}
\]
for every $i\in\mathbb N$. And it is not very difficult to check that $E_i$ are minimizers of \eqref{Probisoper} under the volume constraint $v_i:=|E_i|$. Notice that by Proposition \ref{bounddiam}, each $E_i$ is bounded.

We say that $i \sim j$ if $c_{ij} < +\infty$ and we denote by $[i]$ the equivalence class of $i$.
Notice that $E_i$ equals $E_j$ up to a translation, if $i\sim j$. We want to prove that
\begin{equation}\label{rodi}
\sum_{[i]} v_i = 1,
\end{equation}
where the sum is taken over all equivalence classes.
For all $R>0$ let $Q_R=[-R/2,R/2]^d$ be the cube of sidelength $R$. Then for every $i\in\mathbb N$,
\[
|E_i| \geq |E_i \cap Q_R|= \lim_{k\to +\infty} \left|\left(E_{k} -z_{i,k}\right) \cap Q_R\right|.
\]
If $j$ is such that $j \sim i$ and $c_{ij} \le \frac{R}{2}$, possibly increasing $R$ we have 
$Q_{j,k}- z_{i,k} \subset Q_R$ for all $k\in\mathbb N$, so that
\[
\lim_{k\to +\infty} \left|\left(E_{k} -z_{i,k}\right) \cap Q_R\right|\geq \lim_{k \to +\infty} 
\sum_{c_{ij} \leq \frac{R}{2}} |E_{k} \cap Q_{j,k}|=\sum_{c_{ij} \leq \frac{R}{2}} \alpha_j.
\]
Letting $R\to +\infty$ we then have
\[
|E_i| \geq \sum_{i\sim j} \alpha_j 
\]
hence, recalling \eqref{eqstella},
\[
\sum_{[i]} |E_i| \ge 1,
\]

thus proving \eqref{rodi} (since the other inequality is clear).

Let us now show that 
\begin{equation}\label{rofinal}
 \sum_{[i]} \E(E_i) \le \inf_{|E|=1} \E(E).
\end{equation}
Choosing a representative in each equivalence class $[i]$ and reindexing, from now on 
we shall assume that $c_{ij}=+\infty$ for all $i\ne j$.
Let $I \in \mathbb{N}$ be fixed. Then for every $R>0$ there exists $K \in \mathbb{N}$ such that for every $k\ge K$ and $i$, $j$ less than $I$, we have 
\[d(z_{i,k},z_{j,k}) > R. 
\]
For $k \ge K$ we thus have
\begin{align*}
 \E(E_{k})\geq & \sum_{i=1}^I \int_{\partial E_{k} \cap (B_R+z_{i,k})} F(x,\nu^{E_k})\,d\mathcal H^{d-1}
 \\
		=   & \sum_{i=1}^I \int_{\partial (E_{k}-z_{i,k}) \cap B_R} F(x,\nu^{E_k})\,d\mathcal H^{d-1}
		\\
		=   & \sum_{i=1}^I \E(E_{k} -z_{i,k} ,B_R)
\end{align*}

{}From this, and the lower-semicontinuity of $\E$, we get
\[
 \inf_{|E|=1} \E(E)\ge \sum_{i=1}^I \liminf_{k\to \infty} \E(E_{k} -z_{i,k} ,B_R) \ge \sum_{i=1}^I \E(E_i, B_R).
\]
Letting $R\to \infty$  and then $I\to \infty$ (if the number of equivalence classes is finite then just take $I$ equal to this number), we find \eqref{rofinal}. Let finally $d_i:= \textup{diam}(E_i)$ and $F:=\bigcup_{i} \left(E_i+ 2d_i e_1\right)$ where $e_1$ is a unit vector then $|F|=1$ and 
\[\E(F)=\sum_{i} \E(E_i)\le \inf_{|E|=1} \E(E)\]
and thus $F$ is a minimizer of \eqref{Probisoper} (notice that by Proposition \ref{bounddiam}, we must have $E_i=\emptyset$ for $i$ large enough).

\end{proof}

\begin{remark}\label{remspadaro}\rm
 Another proof, in the spirit of \cite[Th. 2.6]{GNvol} would consist in proving first existence of compact minimizers of the relaxed problems
\begin{equation}\label{Emu}
 \min_{E\subset \R^d} \E(E)+\mu ||E|-v|
\end{equation}
for $\mu>0$ using uniform density estimates (see \cite[Prop. 2.1, Prop. 2.3]{GNvol}) and then showing that for $\mu$ large enough, the minimizers of \eqref{Emu}
 have volume exactly equal to $v$. In this more general situation with respect to the one studied in \cite{GNvol}, instead of relying on the Euler-Lagrange equation as in \cite[Th. 2.6]{GNvol} (which works only in a smooth setting i.e. for $F$ elliptic and smooth and for low dimension $d$)
 one could argue by contradiction and follow the lines of \cite{FE}. Notice also that contrary to \cite[Th. 2.6]{GNvol}, this strategy (just as the one adopted here in the proof of Theorem \ref{existvol}) would not give quantitative bounds on the diameter.
\end{remark}
\begin{remark}\rm
 The isoperimetric problem \eqref{Probisoper} is very similar to the isoperimetric problem on manifold with densities which has 
recently attracted a lot of attention and where similar issues of existence of compact minimizers appear (see \cite{morgan, MP, cinti}). 
Notice however that in these works, the media is usually considered as isotropic, meaning that $F(x,p)=f(x)|p|$ with some hypothesis on the behavior at infinity (or with some radial symmetry) of $f$ 
which is not compatible with periodicity. 
\end{remark}

\begin{remark}\rm
 Using Almgren's Lemma, it is not difficult to see that minimizers of the isoperimetric problem \eqref{Probisoper}
 are quasi-minimizers of $\E$ (of course without volume constraint anymore) and as such, they enjoy the same regularity properties
 (see \cite[Example 2.13]{maggi},\cite{Duzaar}). In particular, under the hypothesis of Section \ref{secreg}, they are $C^{2,\alpha}$ out of a singular set of $(d-3)$-Hausdorff measure equal to zero. 
\end{remark}

Let $W=\{\phi^\circ\le 1\}$ be the Wulff shape associated to $\phi$. It is then the (unique) solution to the isoperimetric problem associated to $\phi$ (see \cite{FM})
\[\min_{|E|=|W|} \int_{\partial^* E} \phi(\nu^E) d \HH.\]
 For $v>0$, let $E_v$ be a compact minimizer of \eqref{Probisoper}. Let $\eps:= \left(\frac{|W|}{v}\right)^{1/d}$ and $E_\eps:= \eps E_v$ then $E_\eps$ is a minimizer of 
\[\min_{|E|=|W|} \int_{\partial^* E} F(x/\eps,\nu^E) d\HH\]
then using Theorem \ref{thconv} and following the same proof as in Theorem \ref{existvol} (see \cite[Th. 4.9]{GNvol}), we get:
\begin{theorem}
 There exist a sequence of vectors $z_\eps\in \R^d$ such that $E_\eps+z_\eps \to W$ when $\eps\to 0$.
\end{theorem}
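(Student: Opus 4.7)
The plan is to combine the $\Gamma$-convergence of $\E_\eps$ to $\E_0$ stated in Theorem \ref{thconv} with the concentration-compactness analysis already carried out in the proof of Theorem \ref{existvol}, and to conclude via the uniqueness of the Wulff shape as the minimizer of the anisotropic isoperimetric problem associated to $\phi$ (see \cite{FM}). By construction $|E_\eps|=|W|$ for every $\eps>0$ and $E_\eps$ minimizes $\E_\eps$ under this volume constraint; a comparison with (a volume-preserving modification of) $W$ together with \eqref{boundF} already gives a uniform perimeter bound $P(E_\eps)\le C$.

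First, I would apply almost verbatim the cube-partition argument from the proof of Theorem \ref{existvol} to the sequence $E_\eps$: this produces points $z_{i,\eps}\in\R^d$ and sets $E^i\subset\R^d$ of locally finite perimeter such that, along a subsequence, $E_\eps-z_{i,\eps}\to E^i$ in $L^1_{\loc}(\R^d)$, with $|z_{i,\eps}-z_{j,\eps}|\to+\infty$ for $i$, $j$ in distinct equivalence classes, and with $\sum_i|E^i|=|W|$. Then, exactly as in Theorem \ref{existvol}, subadditivity of the localized energy on disjoint balls combined with the $\Gamma$-liminf inequality of Theorem \ref{thconv} will give
\[
\sum_i \E_0(E^i)\ \le\ \liminf_{\eps\to 0}\,\E_\eps(E_\eps).
\]

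For the matching upper bound I would use the $\Gamma$-limsup inequality: it provides a recovery sequence $W_\eps\to W$ in $L^1$ with $\limsup_{\eps\to0}\E_\eps(W_\eps)\le \E_0(W)$. A small volume adjustment via Lemma \ref{almgren}, applied in a ball $B$ where $\HH(\partial^* W_\eps\cap B)>0$, produces $\widetilde W_\eps$ with $|\widetilde W_\eps|=|W|$ and $|\E_\eps(\widetilde W_\eps)-\E_\eps(W_\eps)|\to 0$; testing $E_\eps$ against $\widetilde W_\eps$ then yields $\limsup_{\eps\to0}\E_\eps(E_\eps)\le \E_0(W)$, so that
\[
\sum_i \E_0(E^i)\ \le\ \E_0(W).
\]
Now the Wulff isoperimetric inequality $\E_0(E^i)\ge d|W|^{1/d}|E^i|^{(d-1)/d}$ (with equality if and only if $E^i$ is, up to translation, a dilate of $W$, see \cite{FM}), combined with $\sum_i|E^i|=|W|$ and the strict concavity of $t\mapsto t^{(d-1)/d}$, forces exactly one index $i_0$ with $|E^{i_0}|=|W|$ and $E^{i_0}$ a translate of $W$, every other $E^i$ being empty. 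Setting $z_\eps:=-z_{i_0,\eps}$ (composed with the translation identifying $E^{i_0}$ with $W$), this yields $L^1_{\loc}$-convergence $E_\eps+z_\eps\to W$, which upgrades to $L^1(\R^d)$ convergence because the two sets share the same total volume $|W|$.

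The main obstacle will be the joint handling of the volume constraint and the possible loss of mass at infinity in the concentration-compactness step: one has to rule out simultaneously mass escaping to infinity and splitting into several non-trivial pieces, and the only available mechanism is the strict subadditivity coming from the Wulff isoperimetric inequality. For this argument to close one genuinely needs the sharp upper bound $\limsup_\eps \E_\eps(E_\eps)\le \E_0(W)$, which is where Almgren's Lemma becomes essential: the $\Gamma$-limsup recovery sequence has no a priori reason to respect the constraint $|W_\eps|=|W|$, and Lemma \ref{almgren} is precisely what allows one to reabsorb the volume error at negligible energy cost.
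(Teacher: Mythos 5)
Your proposal is correct and follows essentially the same route as the paper, which explicitly reduces this theorem to ``Theorem \ref{thconv} together with the proof of Theorem \ref{existvol}'' (i.e.\ the $\Gamma$-convergence statement combined with the cube-partition concentration-compactness argument, and the Fonseca--M\"uller uniqueness of the Wulff shape to identify the limit). Your elaboration of the strict-subadditivity step via the sharp Wulff inequality, and of the volume-fixing of the recovery sequence via Lemma \ref{almgren}, fills in exactly the details the paper delegates to \cite[Th.~4.9]{GNvol}.
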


The asymptotic shape for small volume has been  investigated in a very precise way in \cite{figmag}.
\section*{Acknowledgments.}
The second author thanks E.~Spadaro and E.~Cinti for interesting discussions about Almgren's Lemma and Remark \ref{remspadaro}.


\begin{thebibliography}{}

\end{thebibliography}


\begin{thebibliography}{100}
\bibitem{ASS} {\sc F.J. Almgren, R. Schoen and L. Simon}, {\em Regularity and singularity estimates on hypersurfaces minimizing elliptic variational integrals}, Acta Math., vol. 139, pp. 217-265, 1977.

%\bibitem{AFP} {\sc L.~Ambrosio, N.~Fusco and D.~Pallara}, {\em Functions of bounded variation and free discontinuity problems}, Oxford Science Publications, 2000.
\bibitem{Anzellotti} {\sc G.~Anzellotti}, {\em Pairings between measures and bounded functions and compensated compactness}, 	Annali di Matematica Pura ed Applicata,	135, 1, 293-318, 1983. 

\bibitem{aubry} {\sc S.~Aubry}, {\em The Devil's staircase Transformation in incommensurate Lattices}, Lecture Notes in
Math. 925, 221-245, 1982.

\bibitem{auerbangert} {\sc F.~Auer and V.~Bangert}, {\em Differentiability of the stable norm in codimension one}, Amer. J. Math., 128, 1, 215-238, 2006.
\bibitem{Bangexist} {\sc V.~Bangert}, {\em The existence of gaps in minimal foliations}, Aequationes
Math., 34, 153--166, 1987.

\bibitem{UNIBan} {\sc V.~Bangert}, {\em A uniqueness theorem for $\Z^n$-periodic variational problems}, Comment. Math. Helvetici, 62, 511-531,  1987.  

\bibitem{Bangertmin} {\sc V.~Bangert}, {\em On minimal laminations of the torus}, Ann. IHP Analyse non lin\'eaire, 2, 95--138, 1989.
%\bibitem{bessi} {\sc U. Bessi}, {\em Aubry sets and the differentiability of the minimal average action in codimension one}, ESAIM Control Optim. Calc. Var., 15,  1, 1--48, 2009.

\bibitem{bourgainbrezis} {\sc J.~Bourgain and H.~Brezis}, {\em On the Equation $\Div Y=f$ and  Application to Control of Phases}, 
Jour. of Amer. Math. Soc., vol. 16 n. 2, 393-426, 2002.

\bibitem{BrButFra} {\sc A.~Braides, G.~Buttazzo and I.~Fragal\`a}, {\em Riemannian approximation of Finsler metrics}, Asymptotic Anal. , 31, 2, 177-187, 2002. 
\bibitem{BrCP} {\sc A.~Braides and V.~Chiad\`o Piat}, {\em A derivation formula for convex integral functionals defined on $BV(\Om)$},
 J. Convex Anal.  2  no. 1-2, 69--85, 1995.
\bibitem{burago} {\sc D.~Burago, S.~Ivanov and  B.~Kleiner}, {\em On the structure of the stable
norm of periodic metrics}, Math. Res. Lett., 4,  791--808, 1997.
\bibitem{CDLL} {\sc L.~Caffarelli and R.~de la Llave}, {\em Planelike Minimizers in Periodic Media}, Comm. Pure Appl. Math., vol. 54, 1403-1441, 2001. 
\bibitem{CGN_PL} {\sc A.~Chambolle, M.~Goldman and M.~Novaga}, {\em Plane-like minimizers and differentiability of the stable norm}, to be published in J.~Geom. Anal.
\bibitem{CGN_reg} {\sc A.~Chambolle, M.~Goldman and M.~Novaga}, {\em Fine properties of the subdifferential for a class of one-homogeneous functionals }, preprint.
\bibitem{CT} {\sc A.~Chambolle and G.~Thouroude}, {\em Homogenization of interfacial energies and construction of plane-like minimizers in periodic media through a cell problem}, Netw. Heterog. Media, vol. 4 n. 1, 127-152, 2009.
\bibitem{Duzaar} {\sc F.~Duzaar and K.~Steffen}, {\em Optimal Interior and Boundary Regularity for Almost Minimizers to Elliptic Variational Integrals}, J. Reine angew. Math., 546, 73-138, 2002. 
\bibitem{cinti} {\sc E.~Cinti and A.~Pratelli}, {\em The $\varepsilon-\varepsilon^\beta$ property, the boundedness of isoperimetric sets in $\mathbb R^N$ with density, and some applications}, preprint.
\bibitem{DM} {\sc G. Dal Maso}, {\em An introduction to $\Gamma$-convergence}, 
Progress in Nonlinear Differential Equations and their Applications, vol. 8, Birkh\"auser, 1993.  
\bibitem{DPPF} {\sc T.~De Pauw and  W.~Pfeffer}, {\em Distributions for which $\Div v=F$ has a continuous solution}, Comm. Pure Appl. Math.  61, 2, 2008.  
\bibitem{FE} {\sc L.~Esposito and  N.~Fusco}, {\em  A remark on a free interface problem with volume constraint},
 J. Convex Anal.  18,  no. 2, 417--426, 2011.
\bibitem{figmag}{\sc A.~Figalli and F.~Maggi}, {\em On the equilibrium shapes of liquid drops and crystals}, Arch. Ration. Mech. Anal.  201,  no. 1, 2011.
\bibitem{FM} {\sc I.~Fonseca and  S.~M\"uller}, {\em A uniqueness proof for the Wulff theorem}, Proc. Roy. Soc. Edinburgh Sect. A  119,  no. 1-2, 1991.
\bibitem{giusti} {\sc E.~Giusti}, {\em Minimal Surfaces and functions of Bounded Variation}, Monographs in Mathematics, vol. 80, Birkh\"auser, 1984.
\bibitem{GNvol} {\sc M.~Goldman and M.~Novaga}, {\em Volume-constrained minimizers for the prescribed curvature problem in periodic media}, Calc. Var. and PDE, vol. 44, Issue 3, 2012.

\bibitem{HJG} {\sc H.~Junginger-Gestrich}, {\em Minimizing hypersurfaces and differentiability properties of the stable norm}, PhD Thesis, Freiburg, 2007. 

\bibitem{maggi} {\sc F.~Maggi}, {\em Sets of finite perimeter and geometric variational problems: an introduction to Geometric Measure Theory}, Cambridge Studies in Advanced Mathematics, 135, Cambridge University Press, 2012.
%\bibitem{massart} {\sc D.~Massart}, {\em Stable norms of surfaces: local structure of the unit ball at rational directions}, Geom. Funct. Anal., 7, 996--1010, 1997.

\bibitem{mather} {\sc J.N.~Mather}, {\em Differentiability of the minimal average action as a function of the rotation
number}, Bol. Soc. Bras. Mat., 21, 59-70, 1990.
\bibitem{morgan} {\sc F.~Morgan}, {\em Regularity of isoperimetric hypersurfaces in Riemannian manifolds}, Trans. AMS, vol. 355 n. 12,  2003.
\bibitem{MP} {\sc F.~Morgan and A.~Pratelli}, {\em Existence of isoperimetric regions in $\R^n$ with density}, preprint.
\bibitem{moser} {\sc J.~Moser}, {\em Minimal solutions of variational problems on a torus}, Ann. IHP Analyse non lin\'eaire, 3, 229-272, 1986.
\bibitem{moser2} {\sc J.~Moser},  {\em A stability theorem for minimal foliations on a torus}, Ergo. Th. Dynam. Syst., 8,
251-281, 1988.
\bibitem{S} {\sc W.~Senn}, {\em Differentiability properties of the minimal average action},
Calc. Var. and PDE, 3, 343--384, 1995.

\bibitem{Sennstrict} {\sc W.~Senn}, {\em Strikte Konvexit\"at f\"ur Variationsprobleme auf dem n-dimensionalen Torus}, Manuscripta Math. 71, 45-65, 1991.

\end{thebibliography}
\end{document}